\theoremstyle{plain}
\newtheorem{thm}{Theorem}[section]
\newtheorem{cor}[thm]{Corollary}
\newtheorem{prop}[thm]{Proposition}
\theoremstyle{definition}
\newtheorem{defn}[thm]{Definition}
\theoremstyle{remark}
\newtheorem{rem}[thm]{Remark}
\begin{document}
	
	\title{Minimality of a toric embedded resolution of singularities\\
		\tiny after Bouvier-Gonzalez-Sprinberg} 
	
	\author{B. Karaden\.{I}z \c Sen, C. Pl\'enat and M. Tosun}
	
	\subjclass[2020]{14B05, 14M25, 32S45}
	
	\thanks{This work is partly supported by the projects TUBITAK no.118F320  and PHC Bosphore no.42613UE}
	
	\keywords{Singularities, toric resolutions, jet schemes, Newton polyhedron, profile}

	\begin{abstract}
		This paper is devoted to construct a minimal toric embedded resolution of a rational singularity via jet schemes. The minimality is reached by extending the concept of the profile of a simplicial cone given in \cite{cg}. 
	\end{abstract}
	\maketitle
	
	\section{Introduction}

	\noindent	Let $X$ be a variety with the singular locus $Sing(X)$. By  \cite{hironaka}, it is known that $(X, Sing(X))$ admits a resolution, means that there exists a smooth variety $\tilde{X}$ and a proper birational map $\tilde{X}\rightarrow X$ which is an isomorphism over $X\setminus Sing(X)$. Later, in \cite{Nash},  Nash introduced the arc spaces $X_{\infty}=\{\gamma: Spec \ \mathbb{C}[t]\rightarrow X\}$ associated with $X$ which provides additional information about a resolution; he also conjectured that the number of irreducible components of $X_{\infty}^{Sing(X)}$ (the arcs passing through $Sing(X)$) is at most the number of {\it essential} irreducible components of the exceptional locus of a resolution. J. Fernandez de Bobadilla and M. Pe Pereiran proved in \cite{bobadilla-pe} that the equality is true for surfaces (see also \cite{dedo}), but there are counterexamples in higher dimensions, see for example \cite{Fernex2,IK,JK}.

	\noindent Therefore it makes sense to ask whether one can build a resolution of $X$ by means of  its arc spaces. One way to deal with it is to use the link between the arc and jet spaces of $X$ as the space of arcs  $X_{\infty}$ may be viewed as the limit of the jets schemes $X_m=\{\gamma_m: Spec\ \frac{\mathbb{C}[t]}{t^{m+1}}\rightarrow X\}$ \cite{Fernex}.  We get to the relationship between some irreducible components of jet schemes and divisorial valuations via the correspondence between some irreducible families of arcs (known as cylinders) passing through a subvariety $Y$ and divisorial valuations over $Y$  \cite{ELM}.  This raises the following problem:
	{\it Can one construct an embedded resolution of singularities of $X\subset \mathbb{C}^n$ from the irreducible components of the space $X_m^{Sing(X)}$ of jets centered at $Sing(X)$}?

	\noindent  In light of this, the authors in \cite{bhcm, hc}, (generalizing the dimension $1$ case in \cite{LMR}), construct a toric embedded resolution from the jet schemes for some surface singularities which are Newton non-degenerate in the sense of Kouchnirenko \cite{kouch} and get the following diagram: 
	\[
	\xymatrix{
		\pi^{-1}(X)\cap \tilde S_{\Sigma }=\tilde X  \ar[d]^{\pi } \ar[r]^{}  & \tilde S_{\Sigma } \ar[d]^{\pi_{\Sigma } } \\
		X  \ar[r]^{f} &  \mathbb{C}^n}
	\]
	
	\noindent  where $\tilde{S}_{\Sigma}$ represents the smooth toric variety obtained by a regular refinement $\Sigma $ of  the dual Newton polyhedron $DNP(f)$ of $X:\{f=0\}$ using the valuations associated to the irreducible components of some $m$-jets schemes. 
	\begin{rem} With preceding notation,  the strict transform of $\{f=0\}$ by $\pi_\Sigma$ is the Zariski closure of $(\pi_\Sigma)^{-1} (\mathbb{C}^3 \cap \{f=0\}).$ 
	\end{rem}
	
	\noindent Moreover, the following result indicates that $\tilde{X}=\pi^{-1}(X)\cap \tilde{S_{\Sigma}}$ is smooth.
	
	\begin{thm}\label{NND} \cite{AGS, O1, Varc}
		Let  $X\subset \mathbb{C}^3$ where $X:\{f=0\}$ is Newton non-degenerate in the sense of Kouchnirenko. Then the following properties are equivalent:\\
		1) A refinement $\Sigma$ of $DNP(f)$ is regular.\\
		2) The proper birational morphism $\mu_\Sigma:Z_\Sigma \longrightarrow \mathbb{C}^3$ is an embedded toric resolution of singularities of $X$ where $Z_\Sigma$ is the toric variety associated with $\Sigma$.
	\end{thm}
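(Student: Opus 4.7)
The plan is to treat the two implications separately, leaning on standard toric-geometric facts and on Kouchnirenko's genericity condition. The implication $(2) \Rightarrow (1)$ is essentially formal: if $\mu_\Sigma : Z_\Sigma \to \mathbb{C}^3$ is an embedded resolution of singularities, then in particular the ambient toric variety $Z_\Sigma$ must be smooth, and for toric varieties smoothness of the total space is equivalent to every cone of its defining fan being generated by part of a $\mathbb{Z}$-basis of the lattice; hence $\Sigma$ is regular.

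For $(1) \Rightarrow (2)$ I would proceed in three steps. First, regularity of $\Sigma$ gives smoothness of $Z_\Sigma$ by the same correspondence just invoked, and the fact that $\Sigma$ refines the positive octant (the fan of $\mathbb{C}^3$) makes $\mu_\Sigma$ proper and birational, with exceptional locus the union of the new toric divisors. Second, I would analyse $\mu_\Sigma$ chart by chart: on each affine chart $U_\sigma \simeq \mathbb{C}^3$ attached to a maximal cone $\sigma \in \Sigma$, the morphism is a monomial transformation, and the pull-back of $f$ factors as $\mu_\Sigma^{\ast} f = x^{\mathbf{a}} \cdot \tilde f_\sigma$ with $\tilde f_\sigma$ the local equation of the strict transform. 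Because $\Sigma$ refines $DNP(f)$, the cone $\sigma$ lies inside the cone of $DNP(f)$ dual to a unique compact face $\tau$ of the Newton polyhedron of $f$, and after the monomial substitution $\tilde f_\sigma$ has the shape of the face polynomial $f_\tau$ plus terms vanishing on the toric boundary of $U_\sigma$.

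Third, I would invoke Newton non-degeneracy in the sense of Kouchnirenko: by definition this means that for every compact face $\tau$ the polynomial $f_\tau$ defines a smooth subvariety of the torus, and a direct computation in the chart---this is the content of Varchenko's argument in \cite{Varc} and of \cite{AGS, O1}---transfers this smoothness to $\{\tilde f_\sigma = 0\}$ and yields transversality of the strict transform with every toric stratum of $U_\sigma$. Combined with the fact that the toric boundary is already a normal crossings divisor on the smooth $Z_\Sigma$, this produces the desired embedded resolution. The main conceptual obstacle is precisely this last step: one has to verify chart by chart that the genericity of the face polynomials $f_\tau$ survives the monomial change of variables and forces transversality not only against the hypersurface strata but against every torus orbit of $Z_\Sigma$. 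This is exactly what the Kouchnirenko condition was designed to guarantee, and the rest of the argument amounts to purely formal toric bookkeeping, organised by partitioning the maximal cones of $\Sigma$ according to the compact faces of $DNP(f)$ they refine.
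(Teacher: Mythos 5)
The paper does not prove this statement at all: it is quoted as a known result and attributed to \cite{AGS, O1, Varc}, so there is no internal argument to compare yours against. Judged on its own, your sketch is the standard proof and its overall architecture is sound: the direction $(2)\Rightarrow(1)$ is indeed immediate from the equivalence ``$Z_\Sigma$ smooth $\Leftrightarrow$ every cone of $\Sigma$ is generated by part of a $\mathbb{Z}$-basis,'' and the direction $(1)\Rightarrow(2)$ is correctly organised as a chart-by-chart computation in which $\mu_\Sigma^{\ast} f$ factors into a monomial times a strict transform whose restriction to each toric stratum is governed by a face polynomial $f_\tau$, with non-degeneracy supplying the transversality. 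This is precisely the Varchenko--Oka line of argument that the cited references carry out.

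One point deserves a sharper treatment than you give it. You restrict attention to \emph{compact} faces $\tau$ of the Newton polyhedron, which is the classical Kouchnirenko setting for isolated singularities of convenient polynomials. The hypersurfaces to which this theorem is applied in the paper have one-dimensional singular locus and only \emph{nearly convenient} Newton polyhedra, so some maximal cones $\sigma$ of $\Sigma$ are dual to non-compact faces, and the strata of $Z_\Sigma$ meeting the strict transform over the non-isolated part of $\mathrm{Sing}(X)$ are controlled by the face polynomials of those non-compact faces. Your transversality step must therefore be run for all faces, not just the compact ones; this extension (non-degeneracy for ideals, respectively for all faces) is exactly the contribution of the reference \cite{AGS} that the paper cites alongside the classical ones, and without it your argument only resolves $X$ away from the non-isolated singular locus. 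With that amendment the sketch is complete in outline.
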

	
	\noindent  The goal of this article, following the spirit in \cite{cg}, is to show that there is a minimal toric embedded resolution when $X$ is a surface with rational singularities of multiplicity 3 (RTP-singularities for short) and to provide an algorithm to build it. The complete list of the minimal abstract resolution graphs of RTP-singularities is presented in  \cite{Artin} where the author gives a characterization of rational singularities via their minimal abstract resolution graphs and proved that the embedding dimension for a rational singularity equals $"multiplicity+1"$. The explicit equations defining RTP-singularities in $\mathbb{C}^4$ are due to G. N. Tyurina \cite{Tyurina}. Using some suitable projections of these equations, the authors in \cite{mag} obtained the hypersurfaces $X'\subset \mathbb{C}^3$ with $dim(Sing(X'))=1$ whose normalizations are the surfaces given in \cite{Tyurina} and, they showed that $X'$ is Newton non-degenerate in the sense of Kouchnirenko. These nonisolated forms of RTP-singularities are served in \cite{bhcm} to construct a toric embedded resolution via the jet schemes $X_m$ of RTP-singularities. But the question of minimality remained open because the abstract resolution obtained  in \cite{bhcm} was not itself minimal. Here we define the minimality of the resolution as below:
	
	\begin{defn}
		Let $\Sigma $ be a regular refinement of the $DNP(f)$ with vectors in some subset $G_\Sigma \subset \mathbb R^3$.  A minimal toric embedded resolution is a smooth toric variety obtained by $\Sigma $ if the abstract resolution has no $-1$ curve and 
		$G_\Sigma= \cup G_\sigma$ where $\sigma$'s are full dimensional cones in $\Sigma $ with 
		$$ G_\sigma=\{x\in \sigma\cap \mathbb{Z}^n\backslash{\{0\}}~~|~~ \forall n_1,n_2 \in  \sigma\cap 
		\mathbb{Z}^n, x=n_1+n_2 \Rightarrow n_1=0 \ or\ n_2=0 \}.$$ 
	\end{defn}
	\noindent Using the equations obtained in \cite{ACMZ}, we show the following:
	\begin{thm}
		There exists an equation giving the nonisolated form of an RTP-singularity such that 
		
		\noindent $i)$ its abstract resolution graph is minimal,
		
		\noindent $ii)$ the chosen irreducible components of the $m$-jets schemes are associated with vectors which provide an embedded toric resolution,
		
		\noindent $iii)$ those vectors are  in  $G_\Sigma$.\\
	\end{thm}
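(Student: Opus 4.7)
The plan is to proceed case-by-case through the Tyurina classification of RTP-singularities, exploiting the equations produced in \cite{ACMZ} to obtain, for each type, a hypersurface $X':\{f=0\}\subset \mathbb C^3$ with one-dimensional singular locus whose normalization is the given RTP surface. By the results recalled in \cite{mag}, any such hypersurface is Newton non-degenerate in the sense of Kouchnirenko, so Theorem \ref{NND} reduces the embedded resolution question to the combinatorial problem of choosing a regular refinement $\Sigma$ of $DNP(f)$. The whole issue is therefore to pick one specific equation from \cite{ACMZ} per type, and to verify for it the three minimality conditions i), ii), iii) simultaneously.

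First, I would compute, for each selected equation, the relevant irreducible components of the jet schemes $X'_m$ centered at $\mathrm{Sing}(X')$, following the machinery developed in \cite{bhcm, hc}. To each such component I attach its associated divisorial valuation, producing a finite collection of primitive integer vectors inside $DNP(f)$. These vectors generate a fan $\Sigma$ refining $DNP(f)$, and the first routine verification is that $\Sigma$ is regular: then Theorem \ref{NND} directly yields ii). The second verification is that the abstract resolution graph of $X'$ induced by $\Sigma$ carries no $(-1)$-curve. This is where the choice of equation coming from \cite{ACMZ} matters, since the equations used in \cite{bhcm} are known to produce a non-minimal graph; I would compare both presentations and trace how replacing the equation removes the blow-down divisors.

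The truly new step is iii), which uses the extension of the Bouvier-Gonzalez-Sprinberg profile from \cite{cg}. For each maximal cone $\sigma\in\Sigma$ I would describe $G_\sigma$ explicitly in terms of the supporting hyperplanes of $DNP(f)$, and then check that each vector attached to a chosen jet-scheme component is $\sigma$-indecomposable, i.e.\ lies in $G_\sigma$ and not merely in $\sigma\cap \mathbb Z^3\setminus\{0\}$. The reverse inclusion, namely that every element of $G_\sigma$ is realized by some chosen jet-scheme component, is forced by dimension and count once ii) has been established, because the rays of a regular cone are exactly its profile generators.

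The hard part will be step iii): ensuring that the valuations coming from the jet-scheme components coincide precisely with the profile generators of the cones, and not just lie in the interior or decompose nontrivially along the lattice. This requires the two combinatorial structures, the jet-scheme fan and the profile $G_\Sigma$, to match exactly on each of the finitely many RTP types, and it is here that the selection of the equation from \cite{ACMZ} (rather than the earlier one used in \cite{bhcm}) plays the decisive role. I expect the verification to be tabular in nature, and the algorithm announced in the introduction will essentially record, for each RTP type, the equation, the selected $m$-jet components, their vectors, and the resulting fan, together with the check that every listed vector belongs to $G_\Sigma$.
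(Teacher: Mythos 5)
Your proposal follows essentially the same route as the paper: select the equations from \cite{ACMZ}, verify minimality of the abstract graph via Oka's algorithm, extract the weight vectors from the essential components of the jet schemes, check by explicit determinant computations that they give a regular refinement of $DNP(f)$ (yielding ii) via Theorem \ref{NND}), and establish iii) through the extended profile, with the coincidence of the essential valuations and $G_\Sigma$ then forced because indecomposable elements must appear as extremal rays in any regular refinement. The only point you leave vaguer than the paper is the actual mechanism for indecomposability: rather than describing $G_\sigma$ directly, the paper exhibits for each cone a convex \emph{subprofile} bounded by explicit hyperplanes inside the profile $p_\sigma$, shows every chosen vector reaches one of these bounding hyperplanes, and deduces freeness over $\mathbb{Z}$ from the convexity of that region.
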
 	
	
	\noindent This implies by $ii)$ and by the fact that the vectors in $G_\Sigma$ are always in any resolution, $G_\Sigma$ is exactly composed of these chosen vectors. We also show that:
	
	\begin{cor}
		The Hilbert basis of the $DNP(f)$ of an RTP-singularity gives a minimal toric embedded resolution.
	\end{cor}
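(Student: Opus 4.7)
The plan is to derive the corollary directly from the previous theorem together with the standard identification of the set $G_\sigma$ in the minimality definition with the Hilbert basis of the cone $\sigma$. First I would recall that the Hilbert basis of a rational polyhedral cone $\sigma$ (with respect to the lattice $\mathbb{Z}^n$) is by definition the unique minimal generating set of the semigroup $\sigma\cap\mathbb{Z}^n$, and that an element $v\in\sigma\cap\mathbb{Z}^n\setminus\{0\}$ lies in this Hilbert basis if and only if $v=n_1+n_2$ with $n_1,n_2\in\sigma\cap\mathbb{Z}^n$ forces $n_1=0$ or $n_2=0$. This is exactly the defining condition of $G_\sigma$ in the minimality definition above, so $G_\sigma$ is the Hilbert basis of $\sigma$ and $G_\Sigma=\bigcup_\sigma G_\sigma$ is what one naturally calls the Hilbert basis of the fan $\Sigma$ refining $DNP(f)$.

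Next I would invoke the previous theorem: for an RTP-singularity there is a nonisolated form $X'=\{f=0\}\subset\mathbb{C}^3$ and a regular refinement $\Sigma$ of $DNP(f)$, built from vectors associated to certain irreducible components of the jet schemes $X_m$, such that the associated toric morphism is an embedded resolution whose abstract resolution graph is minimal, and such that all chosen vectors already lie in $G_\Sigma$. As pointed out in the remark following the theorem, vectors in $G_\Sigma$ must appear in every regular refinement (they are indecomposable, hence any regular subdivision must use them as rays), so the set of chosen vectors is forced to coincide with $G_\Sigma$, i.e.\ with the Hilbert basis of $DNP(f)$.

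Combining these facts, the refinement of $DNP(f)$ whose rays are precisely the Hilbert basis vectors agrees with the refinement $\Sigma$ produced by the theorem. By Theorem~\ref{NND}, the regularity of $\Sigma$ ensures that $\mu_\Sigma:Z_\Sigma\to\mathbb{C}^3$ is an embedded toric resolution of $X'$. By property (i) of the theorem the abstract resolution graph contains no $-1$-curve, and by construction $G_\Sigma$ equals the Hilbert basis; both clauses of the definition of minimality are therefore satisfied, and the corollary follows.

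The main obstacle I anticipate is not in the logical assembly, which is short, but in making sure that the chosen jet-scheme vectors exactly exhaust the Hilbert basis rather than form a proper subset: one direction (chosen vectors $\subset G_\Sigma$) is point (iii) of the theorem, while the other direction (every Hilbert basis vector is among the chosen ones) relies on the observation, recorded in the remark, that indecomposable lattice vectors must appear as rays of any regular refinement of $DNP(f)$. Once this two-sided inclusion is granted, the corollary is a formal consequence.
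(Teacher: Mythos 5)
Your argument is correct and follows essentially the same route as the paper: you identify $G_\sigma$ with the Hilbert basis $H_\sigma$, combine parts (i)--(iii) of the main theorem with the fact (the paper's Theorem~2.6, rather than a remark) that Hilbert basis elements are extremal in any regular refinement to obtain the two-sided inclusion, and conclude that the Hilbert basis rays give the regular refinement and hence the minimal embedded resolution. No gap; this matches the paper's own deduction.
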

	
	\noindent Sketch of the proof:
	
	\noindent$i)$ Using the equations given in \cite{ACMZ}, we obtain the minimal abstract graph via Oka's algorithm. 	
	
	\noindent $ii)$ Let $\mathcal{C}_m$ be an irreducible component of $X_m^{Sing(X)}$. Then ${\psi_m^a}^{-1}(\mathcal{C}_{m})$ is an irreducible cylinder in $\mathbb{C}^3_\infty$ (where ${\psi_m^a}: \mathbb{C}^3_\infty \longrightarrow \mathbb{C}^3_m$ is the truncation morphism associated with the ambient space $\mathbb{C}^3$). Let $\eta$ be the generic point of ${\psi_m^a}^{-1}(\mathcal{C}_{m})$. { By Corollary 2.6 in} \cite{ELM},  the map  $\nu_{\mathcal{C}_m}:\mathbb C[x,y,z]\longrightarrow \mathbb{N}$ defined by  
	$$ \nu_{\mathcal{C}_m}(h)=\mbox{ord}_t h\circ \eta $$
	is a divisorial valuation on $\mathbb C^3.$ We can associate a vector with $\mathcal{C}_m,$ called the weight vector, in the following way:
	$$v(\mathcal{C}_m):=(\nu_{\mathcal{C}_m}(x),\nu_{\mathcal{C}_m}(y),\nu_{\mathcal{C}_m}(z))   \in \mathbb{N}^{3}.$$ 
	We define the "good" irreducible components of jets schemes giving a resolution after computing the graph of the jet schemes (see \cite{bhcm, Mo, hc} for definition and detailed computations) and call the corresponding vectors as ''essential valuations''.
	
	\noindent $iii)$ Finally, to show that the essential valuations are in $G_{\Sigma}$, we introduce, following \cite{cg}  the profile for a cone generated by at least $3$ vectors. Then we show that the essential valuations are inside the profile; more precisely, we find a convex set inside the profile such that the vectors reach the hypersurfaces delimiting these sub-cones so-called sub-profiles. The convexity implies that the essential valuations are free over $\mathbb{Z}$, i.e. in  $G_{\Sigma}$. Thus as they give a non-singular refinement of $DNP(f)$, the essential valuations and elements of $G_{\sigma}$ (for each $\sigma$) coincide.
	\\
	
	\noindent Our remarks and questions: 
	\\
	\noindent $1)$ Question 1: It is known that the vectors obtained via tropical valuations of $X$ give the minimal abstract resolution of $X$ (see \cite{AGS, ar-hu}). We observe the intersection of the set of vectors in the Groebner fan of $X$ with the set of vectors obtained from jet schemes of $X$ is exactly the Hilbert basis for rational double point singularities (RDP-singularities). Is this true for all Newton non-degenerate singularities? 
	\\
	\noindent $2)$ Question 2: For RDP-singularities and RTP-singularities all the vectors in the Hilbert basis lie inside the profiles. Is the fact that the vectors in the Hilbert basis lie inside the profile a characterization of rational singularities? For example, the surfaces defined by $f=y^3+xz^2-x^4=0$ and $f=z^2+y^3+x^{21}=0$ have elliptic singularities and they are Newton non-degenerates. Their Hilbert basis give a resolution of singularities; but in both cases, the profile does not contain all the vectors in the Hilbert basis. 
	\\
	\noindent $3)$ Question 3: Does Hilbert basis give an embedded resolution for any Newton non-degenerate singularity? \\
	
	\noindent This article  is structured as follows: We start by recalling the definition of Hilbert basis of a cone. We generalize the notion of a profile given in \cite{cg}. Then, using the new equations of RTP-singularities (comparing with \cite{mag, bhcm}) we develop the proof of the theorem for $B$-types which was a special case in \cite{bhcm} as the authors did not obtain a toric embedded resolution. We end up with some remarks on the preceding questions. One can find in the Appendix the computations for the RTP-singularities.
	
	\section{Hilbert basis of polyhedral cones}
	
	\noindent Let $n,r\in \mathbb{N}^{*}$. Let $v_1,\ldots,v_r$ be some vectors in $\mathbb{Z}^{n}$. A rational polyhedral cone in $\mathbb{R}^n$ generated by the vectors $\{v_1,\ldots,v_r\}$ is the set 
	$$\sigma:=<v_1,\ldots,v_r>=\{v\in \mathbb{R}^n \mid \ v=\sum_{i=1}^{r}\lambda_i v_i, \ \lambda_{i}\in \mathbb{R}_{\geq 0}\}.$$ 
	When $\sigma$ doesn't contain any linear subspace of $\mathbb{R}^n$ we call it {\it strongly convex}. In the sequel, a cone will mean a strongly convex rational polyhedral cone.  The dimension of $\sigma$ is the dimension of the subspace $span\{v_1,\ldots,v_r\}$ in $\mathbb{R}^n$. Two cones  $\sigma $ and $\sigma' $ in $\mathbb{R}^n$  are said to be equivalent if $dim(\sigma )=dim(\sigma')$ and there exists a matrix $A\in GL_n(\mathbb Z)$ with $M(\sigma)=A\cdot M(\sigma')$ where $M(\sigma)$ denotes the matrix $[v_1 \ldots v_r]$. When $dim(\sigma )=n=r$ we say that $\sigma $ is a {\it simplicial cone}.
	
	\begin{defn} 
		A vector $v \in \mathbb{Z}^n$ is called primitive if all its coordinates are relatively prime. A cone $\sigma=<v_1,\ldots,v_r>\subset \mathbb{R}^n$ is called regular if the generating vectors are primitive and $M(\sigma)$ is unimodular.
	\end{defn}
	
	\noindent It is well known that the notion of regular cones is important in toric geometry, and in singularity theory a regular cone leads to a smooth toric variety. A regular cone can be constructed from a non-regular cone. Such a process is called {\it regular refinement}; it consists of a refinement of a cone into the subcones by some $n-1$ dimensional subspaces such that every subcone in the subdivision is regular. Let's recall a few concepts to provide a better definition of getting a regular refinement of a cone. Consider the set $S_{\sigma}:=\sigma\cap \mathbb{Z}^n$ which is a finitely generated semigroup with respect to the addition. For special $\sigma$'s there are several methods to find the set of generators of $S_{\sigma}$. One method comes from integer programming \cite{giles}.
	
	\begin {defn} A subset $H_{\sigma}\subset S_{\sigma}$ is called the Hilbert basis of $\sigma$ if any element $u\in S_{\sigma}$ can be written as a non-negative integer combination of the elements  in $H_{\sigma}$ and it is the smallest set of generators with respect to inclusion.
\end{defn}

\begin{prop} {\rm \cite{rosales}}
	Every cone admits a finite Hilbert basis. 
\end{prop}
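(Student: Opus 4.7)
My plan is to prove this in two moves: first establish that the semigroup $S_\sigma=\sigma\cap\mathbb{Z}^n$ admits \emph{some} finite generating set (a form of Gordan's lemma), then extract the Hilbert basis as the set of irreducible elements of $S_\sigma$ and verify it has the required minimality.

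For the first step I would start from the fact that $\sigma$ is rational polyhedral, so we can choose generators $v_1,\dots,v_r\in\mathbb{Z}^n$ of the cone. Consider the compact fundamental box
$$K=\Bigl\{\sum_{i=1}^{r}\lambda_i v_i : 0\le\lambda_i\le 1\Bigr\}\subset\mathbb{R}^n.$$
Since $K$ is bounded and $\mathbb{Z}^n$ is discrete, $K\cap\mathbb{Z}^n$ is finite. The key claim is that $K\cap\mathbb{Z}^n$ generates $S_\sigma$ as a semigroup: given $v\in S_\sigma$, write $v=\sum\lambda_i v_i$ with $\lambda_i\geq 0$ and split off integer parts,
$$v=\sum_{i=1}^{r}\lfloor\lambda_i\rfloor v_i+\sum_{i=1}^{r}\{\lambda_i\} v_i,$$
so that the second summand lies in $K$ and, as the difference of two lattice points, also in $\mathbb{Z}^n$; since the $v_i$ themselves sit in $K\cap\mathbb{Z}^n$, this exhibits $v$ as a non-negative integer combination of elements of $K\cap\mathbb{Z}^n$.

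For the second step, using strong convexity I would introduce the irreducibility relation on $S_\sigma\setminus\{0\}$: call $u\in S_\sigma\setminus\{0\}$ irreducible if $u=n_1+n_2$ with $n_i\in S_\sigma$ forces $n_1=0$ or $n_2=0$, and define $H_\sigma$ to be the set of irreducible elements. Because $\sigma$ is strongly convex, there exists a linear functional $\ell$ on $\mathbb{R}^n$ with $\ell(u)>0$ for every $u\in S_\sigma\setminus\{0\}$; this gives a well-founded grading, so every $u\in S_\sigma$ can be written as a finite sum of irreducibles by induction on $\ell(u)$, showing $H_\sigma$ generates $S_\sigma$. Moreover, any generating set of $S_\sigma$ must contain every irreducible element (an irreducible cannot be written as a non-trivial sum, so in particular not as a sum of other generators), proving minimality with respect to inclusion and uniqueness of $H_\sigma$.

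Finiteness of $H_\sigma$ is then immediate: $H_\sigma\subseteq K\cap\mathbb{Z}^n$ because any $u\in S_\sigma$ not lying in $K\cap\mathbb{Z}^n$ has some $\lfloor\lambda_i\rfloor\ge 1$ in the decomposition above, hence decomposes non-trivially in $S_\sigma$ and fails to be irreducible. I do not anticipate a serious obstacle here; the only subtle point is the appeal to strong convexity to guarantee the existence of the positive functional $\ell$, which is exactly what prevents an infinite descending chain of non-trivial decompositions and thus ensures that the inductive extraction of irreducibles terminates.
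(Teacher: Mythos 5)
Your argument is correct, but note that the paper does not actually prove this statement: it is quoted from Rosales--Garc\'ia-S\'anchez \cite{rosales} without proof, so there is no internal argument to compare against. What you give is the standard self-contained proof (Gordan's lemma plus extraction of irreducible elements), and it is sound; moreover your first step --- splitting off integer parts so that the fractional remainder lands in the compact box $K$ and is again a lattice point --- is exactly the computation the paper does use one proposition later to show $H_{\sigma}\subseteq P_{\sigma}$, so your proof in effect subsumes that neighbouring result. Two small points deserve a line of polish. First, the induction on $\ell(u)$ ranges over real values; either choose $\ell$ in the interior of the dual cone with integer coefficients, so that $\ell(S_{\sigma})\subseteq \mathbb{Z}_{\geq 0}$ and the induction is over $\mathbb{N}$, or observe that strong convexity makes $\sigma\cap \ell^{-1}([0,C])$ compact, hence containing only finitely many lattice points, which is what makes the descent well-founded. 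Second, in the final containment $H_{\sigma}\subseteq K\cap\mathbb{Z}^n$ you should handle the degenerate case where $u\notin K$ but all fractional parts vanish: then $u=\sum_{i}\lfloor\lambda_i\rfloor v_i$ with $\sum_{i}\lfloor\lambda_i\rfloor\geq 2$ (a single unit coefficient would put $u=v_i\in K$), so $u=v_i+(u-v_i)$ is still a non-trivial decomposition in $S_{\sigma}$ and $u$ is reducible. With these touches the proof is complete.
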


\begin{prop}
	The Hilbert basis $H_{\sigma}$ is contained in the parallelepiped
	$$P_{\sigma}:=\{u\in \mathbb{Z}^n \mid u=\sum_{i=1}^{r}\lambda_i v_r , \ 0\leq\lambda_i \leq  1\}.$$ 
\end{prop}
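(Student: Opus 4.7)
The plan is to proceed by contradiction using the characterization that every element of $H_\sigma$ is \emph{irreducible} in the semigroup $S_\sigma$: if $u=a+b$ with $a,b\in S_\sigma$, then $a=0$ or $b=0$. Granting this, the proposition will follow from the elementary observation that when a representation $u=\sum_{i=1}^{r}\lambda_i v_i$ has some $\lambda_{i_0}>1$, one can peel off the generator $v_{i_0}$ and obtain the decomposition $u=v_{i_0}+(u-v_{i_0})$ with both summands in $S_\sigma$.

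The first step, and in my view the chief technical obstacle since the definition of $H_\sigma$ given in the excerpt only provides inclusion-minimality, is to establish this irreducibility. I would argue: if $u\in H_\sigma$ and $u=a+b$ with $a,b\in S_\sigma\setminus\{0\}$, expand each of $a$ and $b$ as non-negative integer combinations of $H_\sigma$. Strong convexity of $\sigma$ makes the relation $x\leq y\iff y-x\in S_\sigma$ a partial order on $S_\sigma$, and we have $a,b\leq u$. If $u$ itself appeared with positive coefficient in the expansion of $a$, then $a\geq u$, forcing $a=u$ and hence $b=0$, a contradiction; likewise $u$ cannot appear in the expansion of $b$. Hence both expansions involve only elements of $H_\sigma\setminus\{u\}$, and summing them rewrites $u$ as a non-negative integer combination of $H_\sigma\setminus\{u\}$, contradicting the minimality of $H_\sigma$.

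With irreducibility in hand, fix $u\in H_\sigma$ and any representation $u=\sum_{i=1}^{r}\lambda_i v_i$ with $\lambda_i\geq 0$, which exists because $u\in\sigma$. If every $\lambda_i\leq 1$ we are done. Otherwise pick an index with $\lambda_{i_0}>1$ and set $u':=u-v_{i_0}=(\lambda_{i_0}-1)v_{i_0}+\sum_{i\neq i_0}\lambda_i v_i$. This is a non-negative real combination of the $v_i$'s, so $u'\in\sigma$, and it is an integer vector, so $u'\in S_\sigma$. If $u'\neq 0$, the decomposition $u=v_{i_0}+u'$ with $v_{i_0},u'\in S_\sigma\setminus\{0\}$ contradicts irreducibility. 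If $u'=0$, then $u=v_{i_0}$ and the representation $u=1\cdot v_{i_0}+0\cdot\sum_{i\neq i_0}v_i$ already lies in $P_\sigma$. In either case $u\in P_\sigma$, completing the proof.
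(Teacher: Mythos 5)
Your proof is correct, and it is a more careful rendering of the same underlying idea the paper uses. The paper's proof is a one-liner: write $u=\sum_i\lambda_i v_i$ and split off the integer parts, so that $u=\sum_i\lfloor\lambda_i\rfloor v_i+u''$ with $u''=\sum_i(\lambda_i-\lfloor\lambda_i\rfloor)v_i\in P_\sigma$; this exhibits $P_\sigma\cap\mathbb{Z}^n$ (which contains the $v_i$) as a generating set of $S_\sigma$, and the conclusion $H_\sigma\subseteq P_\sigma$ then rests on the unstated fact that for a pointed cone the inclusion-minimal generating set is contained in \emph{every} generating set. That unstated fact is precisely the irreducibility characterization you prove in your first step, using strong convexity to get the partial order and antisymmetry --- so you have made explicit the lemma the paper's argument silently relies on. Your second step differs only cosmetically: you subtract a single generator $v_{i_0}$ (and handle the degenerate case $u=v_{i_0}$ separately) rather than subtracting all integer parts at once, which is the contrapositive of the same peeling argument. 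The trade-off is that the paper's version is shorter but leaves a genuine logical gap between ``$P_\sigma\cap\mathbb{Z}^n$ generates'' and ``$H_\sigma\subseteq P_\sigma$,'' whereas yours is self-contained; if anything, your write-up is the one that would survive refereeing without a follow-up question.
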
	

\begin{proof} It follows from the fact that any vector $u=\sum_{\lambda_i\geq 0}\lambda_iv_i\in \sigma $ can be written as $u=\sum_{i=1}^{r}(\lfloor {\lambda_i}\rfloor + {\lambda'_i})v_i$ where $\lfloor {\lambda_i}\rfloor$ is the integer part of $\lambda_i$ and $\sum_{i=1}^{r}{\lambda'_i}\in P_{\sigma }$. 
	
\end{proof}

\begin{defn}
	The first primitive vector lying on a $1$-dimensional subcone of $\sigma$ is called an extremal vector of $\sigma$.
\end{defn}

\begin{thm}
	Let $\sigma\subset \mathbb{R}^3$ be a cone. If an element $u\in S_{\sigma}$ is in $H_{\sigma}$ then  it is an extremal vector in any regular refinement of $\sigma$.
\end{thm}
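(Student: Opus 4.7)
The plan is to argue by contradiction using the standard characterization of Hilbert basis elements as \emph{semigroup-irreducible} elements of $S_\sigma$: nonzero $u \in S_\sigma$ for which any decomposition $u = n_1 + n_2$ with $n_1, n_2 \in S_\sigma$ forces $n_1 = 0$ or $n_2 = 0$. This is exactly the set $G_\sigma$ recalled in the introduction, and a routine semigroup argument shows it coincides with the minimal generating set $H_\sigma$: any decomposable element is redundant in a generating set, while any generating set must contain every indecomposable element. The task thus reduces to showing that a semigroup-irreducible $u$ must be an extremal vector of every regular refinement of $\sigma$.

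Assume for contradiction that $u \in H_\sigma$ lies on no ray of a regular refinement $\Sigma'$. Since $\Sigma'$ covers $\sigma$, pick a maximal cone $\tau \in \Sigma'$ containing $u$, with extremal vectors $w_1, w_2, w_3$. The crucial use of the regularity hypothesis is that $\{w_1, w_2, w_3\}$ forms a $\mathbb{Z}$-basis of $\mathbb{Z}^3$; combined with $u \in \tau \cap \mathbb{Z}^3$, this produces a unique expression
\[
u = \lambda_1 w_1 + \lambda_2 w_2 + \lambda_3 w_3, \qquad \lambda_i \in \mathbb{Z}_{\geq 0},
\]
where the non-negativity of each $\lambda_i$ follows from $u \in \tau$ together with the fact that $\{w_i\}$ is also an $\mathbb{R}$-basis. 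Each $w_i \in \tau \subset \sigma$, so every non-negative integer combination of the $w_i$'s lies in $S_\sigma$.

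I then split into cases. If at least two of the $\lambda_i$ are positive, say $\lambda_1, \lambda_2 > 0$, set $n_1 := \lambda_1 w_1$ and $n_2 := \lambda_2 w_2 + \lambda_3 w_3$; both are nonzero elements of $S_\sigma$ with $u = n_1 + n_2$, contradicting irreducibility. If only $\lambda_1$ is positive, then $\lambda_1 \geq 2$ gives the nontrivial decomposition $u = w_1 + (\lambda_1 - 1) w_1$ in $S_\sigma$, again contradicting irreducibility; hence $\lambda_1 = 1$ and $u = w_1$, an extremal vector of $\tau$ and therefore of $\Sigma'$, contradicting the initial assumption. The only delicate point is the upgrade from the a priori real decomposition $u = \sum \mu_i w_i$ (with $\mu_i \in \mathbb{R}_{\geq 0}$, guaranteed by $u \in \tau$) to an integer decomposition: this is precisely where the regularity of $\Sigma'$, rather than mere simpliciality, is needed, since without unimodularity of $M(\tau)$ the coefficients could be genuinely rational and the grouping argument would fail.
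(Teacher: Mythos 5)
Your proposal is correct and follows essentially the same route as the paper: locate $u$ in a maximal cone of the regular refinement, use unimodularity to write $u$ as a non-negative \emph{integer} combination of that cone's extremal vectors, and invoke the indecomposability of Hilbert basis elements to force $u$ to equal one of them. You simply make explicit the case analysis and the identification of $H_\sigma$ with the irreducible elements of $S_\sigma$, both of which the paper leaves implicit.
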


\begin{proof}
	Let $\Sigma$ be a regular refinement of $\sigma$. Denote by $\tau_1,\tau_2,\ldots, \tau_k$ the maximal dimensional regular subcones in $\Sigma$. Let $u\in H_ {\sigma}$.  So $u$ belongs to at least one of $\tau_i$'s and $u=\alpha_1v_{1}^{(i)}+\alpha_2v_{2}^{(i)}+\alpha_3v_{3}^{(i)}\in \sigma$ where $v_{1}^{(i)}, v_{2}^{(i)}, v_{3}^{(i)}$ are the extremal elements of $\tau_i$, which is a basis for $\mathbb{Z}^3$. Since $u$ belongs to $H_\sigma$, we have $u=v_{j}^{(i)}$ for some $j=1,2,3$, which means that $u$ itself is an extremal vector for $\tau_i$.
	
\end{proof}

\noindent Let $\sigma=<v_1,v_2,\ldots,v_n>\subset \mathbb{R}^n$ be a simplicial cone. Consider the map
\begin{align*}
	l_{\sigma}: \mathbb{R}^n&\rightarrow \mathbb{Q}\\
	v&\mapsto l_{\sigma}(v)
\end{align*}
such that $\l_{\sigma}(v_i)=1$ with each extremal vector $v_i$ for $\sigma$. 

\begin{defn}
	\cite{cg} The subset $$p_{\sigma}:=\sigma\cap l_{\sigma}^{-1}([0;1])$$ is called the profile of $\sigma$.
\end{defn}

\noindent In the case $\sigma \subset \mathbb{R}^n$ is non-simplicial (which will be often the case for RTP-singularities below), we extend the definition as below.

\begin{defn}
	The profile of a cone $\sigma=<v_1,v_2,\ldots,v_r>\subset \mathbb{R}^n$ is the smallest convex hull such that its extremal vectors are exactly $v_1,v_2,\ldots,v_r$.
\end{defn}

\begin{rem}
	It may happen that all extremal vectors are on a unique hyperplane even though $\sigma=<v_1,v_2,\ldots,v_r>\subset \mathbb{R}^n$ is non-simplicial. In this case, $p_{\sigma}$ is defined as in the case of a simplicial cone.\\ Moreover, $p_{\sigma}$ can be identified with its boundaries composed by the union of at most $(r-2)$ hyperplanes in $\mathbb{R}^n$. 
\end{rem}

\begin{prop}
	Let $\sigma=<v_1,v_2,\ldots,v_r>\subset \mathbb{R}^n$. There is no other integer point in $p_{\sigma}$ than the elements of $H_{\sigma}$.
\end{prop}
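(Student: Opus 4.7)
The plan is to prove the inclusion $p_\sigma\cap\mathbb{Z}^n\setminus\{0\}\subseteq H_\sigma$ by contradiction. Let $u$ be a nonzero integer point of $p_\sigma$ and suppose $u\notin H_\sigma$. Since $H_\sigma$ generates $S_\sigma$, $u$ is reducible, so there exist $n_1,n_2\in S_\sigma\setminus\{0\}$ with $u=n_1+n_2$.

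I would first handle the simplicial case $r=n$, where $v_1,\ldots,v_n$ form a $\mathbb{Q}$-basis of $\mathbb{R}^n$. Expanding $u=\sum_i\alpha_i v_i$ and $n_j=\sum_i\beta_i^{(j)}v_i$ with $\alpha_i,\beta_i^{(j)}\geq 0$, the relation $n_1+n_2=u$ forces $\beta_i^{(1)}+\beta_i^{(2)}=\alpha_i$, hence $\beta_i^{(j)}\in[0,\alpha_i]$ for each $i,j$. The profile condition $u\in p_\sigma$ reads $l_\sigma(u)=\sum_i\alpha_i\leq 1$, so $\sum_i\beta_i^{(j)}\leq 1$ as well, placing both $n_1$ and $n_2$ in $p_\sigma$. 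Equivalently, the whole closed sub-parallelepiped $B_u=\{\sum_i t_i v_i:0\leq t_i\leq\alpha_i\}$ lies inside $p_\sigma$, and $n_1,n_2\in B_u\cap\mathbb{Z}^n$.

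The linear form $l_\sigma$ is strictly positive on $\sigma\setminus\{0\}$ because $l_\sigma(\sum\lambda_iv_i)=\sum\lambda_i$, so $l_\sigma(n_j)>0$, and from $l_\sigma(n_1)+l_\sigma(n_2)=l_\sigma(u)\leq 1$ we get $l_\sigma(n_j)<l_\sigma(u)$. Iterating the decomposition on any $n_j\notin H_\sigma$ terminates, since $p_\sigma\cap\mathbb{Z}^n$ is finite, and yields a representation $u=h_1+\cdots+h_k$ with $k\geq 2$ and each $h_i\in H_\sigma\cap p_\sigma$. For the non-simplicial case $r>n$, I would first triangulate the convex hull of $\{0,v_1,\ldots,v_r\}$ into simplices of the previous type and apply the preceding argument on each piece, treating integer points on the shared boundary hyperplanes separately via the remark following the definition of the profile.

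The hard part will be closing the contradiction: one must rule out the existence of a nonzero integer point in $B_u$ other than $u$ itself, since such a point is precisely an admissible $n_1$. The intended strategy is a volumetric estimate on $B_u$ based on $\prod_i\alpha_i\cdot|\det(v_1,\ldots,v_n)|$ combined with $\sum_i\alpha_i\leq 1$ (via AM--GM on the $\alpha_i$), or, alternatively, a descent argument on the value of $l_\sigma$ exploiting the finiteness of $H_\sigma\cap p_\sigma$ together with the fact that each $l_\sigma(h_i)$ is strictly smaller than $l_\sigma(u)\leq 1$.
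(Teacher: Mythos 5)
Your setup is the same as the paper's: reduce to the simplicial case, expand $u=\sum_i\alpha_iv_i$, use linearity of $l_\sigma$ to get $\sum_i\alpha_i\le 1$, and treat the non-simplicial case by passing to a simplicial subdivision. Where the two part ways is exactly at the point you flag as ``the hard part'': the paper simply asserts that such a $v$ ``cannot be written as the sum of two integer vectors'' because all $\alpha_i$ are proper fractions, with no further argument, whereas you correctly see that one must exclude a nonzero integer point $n_1\in B_u\setminus\{u\}$. Unfortunately neither of your proposed ways of closing this works. The descent on $l_\sigma$ only produces a decomposition $u=h_1+\cdots+h_k$ into irreducibles with $k\ge 2$, which is just a restatement of the hypothesis $u\notin H_\sigma$ and yields no contradiction. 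The volumetric route gives $\mathrm{vol}(B_u)\le |\det(v_1,\ldots,v_n)|/n^n$ by AM--GM, but a small volume does not preclude lattice points in a box anchored at the origin.

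In fact no argument can close the gap at this level of generality, because the statement is false for arbitrary cones. Take $\sigma=\langle(4,1),(1,4)\rangle\subset\mathbb{R}^2$, so that $l_\sigma(x,y)=(x+y)/5$. Then $u=(2,2)=\tfrac25(4,1)+\tfrac25(1,4)$ lies in $p_\sigma$ since $l_\sigma(u)=4/5\le 1$, yet $u=(1,1)+(1,1)$ with $(1,1)=\tfrac15(4,1)+\tfrac15(1,4)\in S_\sigma\setminus\{0\}$, so $u\notin H_\sigma$. (A three-dimensional version: $\sigma=\langle(3,1,0),(1,3,0),(0,0,1)\rangle$ with $u=(2,2,0)$.) Here the coefficients $\alpha_i=2/5$ are all proper fractions, which is precisely the situation in which the paper claims indecomposability. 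So the proposition requires an additional hypothesis --- some control on how far $\sigma$ is from regular, or a cone-by-cone verification for the dual Newton polyhedra actually occurring in the paper --- and your instinct that the reducibility step carries all the real content is correct; it is also the step at which the paper's own proof is incomplete.
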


\begin{proof}
	Assume that $r=n$ and $\sigma $ is simplicial. We have $v=\sum_{i=1}^{n}\alpha_{i}v_i \in \sigma$ with $\alpha_{i}\in \mathbb{R}_{\geq 0}$. Let $v\in p_{\sigma}$. Then $l_{\sigma}(v)\in [0,1]$ which means
	$$0\leq l_{\sigma}(\alpha_{1}v_1+\alpha_{2}v_+\ldots+\alpha_{n}v_n)=\alpha_{1}l_{\sigma}(v_1)+\alpha_{2}l_{\sigma}(v_2)+\ldots+\alpha_{n}l_{\sigma}(v_n)\leq 1$$
	\noindent Since $l_{\sigma}(v_i)=1$ for all $i$, we have $0\leq \alpha_{1}+\alpha_2+\ldots+\alpha_{n}\leq1$. If there exists one $i_0\in \{1,\ldots,n\} $ such that $\alpha_{i_0}=1$ we get $v=v_{i_0} \in p_{\sigma}$. If not, we have $\alpha_{i}=\frac{a_i}{b_i}$ with $a_i<b_i$, $b_i\neq 0$ for all $i$ . As $v$ cannot be written as the sum of two integer vectors we have $v\in H_{\sigma}$.
	
	\noindent When $\sigma$ is a non-simplicial cone, we get the affirmation by applying the discussion above to the each simplicial subcone lying in a suitable regular refinement of $\sigma$ into simplicial cones.
	
\end{proof}

\section{The new equations for RTP-singularities}

\noindent Let $X$ be defined by a complex analytic function 
$$f(z_1,z_2,\ldots z_n)=\sum_{(a_1,a_2,\ldots a_n)\in \mathbb Z^{n}_{\geq 0}} c_{(a_1,a_2,\ldots a_n)}{z_1}^{a_1}{z_2}^{a_2}\ldots {z_n}^{a_n}$$ 
The closure in $\mathbb R^n$ of the convex hull of the set 
$$S(f):=\{(a_1,a_2,\ldots a_n)\in {\mathbb Z^n}_{\geq 0}\mid c_{(a_1,a_2,\ldots a_n)}\not =0\}$$
is called {\it the Newton polyhedron} of $f$, denoted as $NP(f)$. Let $\Sigma(f)$ be a regular refinement of the dual Newton polyhedron $DNP(f)$. Then $X_{\Sigma(f)}$ is smooth and a toric map $X_{\Sigma(f)}\rightarrow \mathbb{C}^n $ obtained between the corresponding toric varieties is a toric embedded resolution of $X$ (see \cite{AGS,khovanskii,O1,Varc}). When the coefficients $c_{(a_1,a_2,\ldots a_n)}\in \mathbb C$ are generic and the $NP(f)$ is nearly convenient we say that $f$ is non-degenerate with respect to $NP(f)$. In this case, the regular refinement $\Sigma_2(f)$ of all $2$-dimensional cones in $DNP(f)$ gives an abstract resolution of $X$ and it induces a toric embedded resolution of $X$ by getting a regular refinement $\Sigma_3(f)$ of all $3$-dimensional cones in $\mathbb R^3$ \cite{oka,Varc}.  
\begin{defn}
	Such an embedded resolution is said to be minimal if the vectors appearing in the regular refinement are all irreducible and if the abstract resolution does not present $-1$ curves.
\end{defn}

\noindent We present below an algorithm to find a minimal toric embedded resolution of RTP-singularities which are treated in \cite{mag, bhcm}. Here we use the equations obtained in \cite{ACMZ} to present the non-isolated form of RTP-singularities different than those in \cite{mag}.

\begin{thm}
	The regular refinement $\Sigma_2(f)$ of all $2$-dimensional cones in $DNP(f)$ where $f$ is one of the following equations gives the minimal abstract resolution of the corresponding $RTP$-singularity. 
	
	$i)$ $\bf A_{k,l,m}:$
	
	\ \ \ \ $\bullet$ For $k=l=m>1$
	$$y^{3m+3}+xy^{m+1}z-xz^2-z^3=0$$	
	
	\ \ \ \ $\bullet$ For $k=l<m$ and $k,l,m\geq 1$
	$$y^{k+l+m+3}+y^{2k+2}z+y^{k+1}z^2+xy^{k+1}z+xz^2-z^3=0$$
	
	\ \ \ \ $\bullet$ For  $l<m<k$ and $k,l,m\geq 1$
	
	\ \ \ \ \ \ \ \ \	 $l+k>2m$ and $l+k\leq 2m,$ $l+k$ is even 
	$$y^{3k}+y^{2k+m+l-2}-2y^{l+k}z-xy^kz+y^mz^2+xz^2-z^3=0$$
	
	\ \ \ \ $\bullet$ For  $l<m<k$ and $k,l,m\geq 1$
	
	\ \ \ \ \ \ \ \ \	 $l+k\leq 2m,$ $l+k$ is odd
	$$y^{2k+m}+y^{k+m}z+y^{l+k}z+xy^{k}z-y^kz^2+y^{l}z^2+xz^2-z^3=0$$
	
	$ii)$ $\bf B_{k,n}:$ For  $r\geq 1$, $n\geq 2$
	
	\ \ \ \ $\bullet$ For  $k=2r-1$
	$$x^{2n+3}z-x^ry^2-y^2z=0$$
	
	\ \ \ \ $\bullet$ For  $k=2r$
	$$x^{n+r+2}y-x^{2n+3}z+y^2z=0$$
	
	$iii)$ $\bf C_{n,m}:$ For  $n\geq 3$, $m\geq 2$
	$$x^{n-1}y^{2m+2}+y^{2m+4}-xz^2=0$$

	$iv)$ $\bf D_n:$ For  $n\geq 1$
	$$x^{2n+2}y^2-x^{n+3}z+yz^2=0$$
	
	$v)$ $\bf E_{60}:$
	$$z^3+y^3z+x^2y^2=0$$
	
	$vi)$ $\bf E_{07}:$
	$$z^3+y^5+x^2y^2=0$$
	
	$vii)$ $\bf E_{70}:$
	$$z^3+x^2yz+y^4=0$$
	
	$viii)$ $\bf F_{k-1}:$ For $k\geq 2$
	$$y^{2k+3}+x^2y^{2k}-xz^2=0$$
	
	$ix)$ $\bf H_n:$ For  $n\geq 1$
	
	\ \ \ \ $\bullet$ For  $n=3k-1$
	$$z^3+x^2y(x+y^{k-1})=0$$
	
	\ \ \ \ $\bullet$ For  $n=3k$
	$$z^3+xy^kz+x^3y=0$$
	
	\ \ \ \ $\bullet$ For  $n=3k+1$
	$$z^3+xy^{k+1}z+x^3y^2=0$$
	
\end{thm}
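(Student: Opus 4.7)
The plan is to verify the statement type-by-type using Oka's algorithm for Newton non-degenerate hypersurfaces, together with the classification of minimal resolution graphs of RTP-singularities in Artin's paper. First I would check that each of the listed equations is Newton non-degenerate in the sense of Kouchnirenko so that Theorem \ref{NND} applies: this amounts to verifying generic-coefficient conditions on each compact face of $NP(f)$, and it is essentially the reason those particular monomials (and coefficients $\pm 1, -2$) are selected in \cite{ACMZ} rather than the representatives used in \cite{mag, bhcm}.

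Next, for each of the nine families I would explicitly compute the Newton polyhedron $NP(f)\subset \mathbb{R}^3$, enumerate its compact $2$-faces, and read off the primitive inward-normal vectors spanning the $2$-dimensional cones of $DNP(f)$. A regular refinement $\Sigma_2(f)$ of these $2$-dimensional cones is then produced by the continued-fraction algorithm applied to each pair of adjacent extremal rays. By Oka's algorithm (see \cite{oka, Varc}), the dual graph of the exceptional divisor of the abstract resolution has one vertex per ray in $\Sigma_2(f)$, its edges are given by adjacency in the refinement, and the self-intersections are computed from the usual continued-fraction data. This reduces the verification to a finite combinatorial check for each family.

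The minimality claim then splits into two verifications: (a) the graph produced by $\Sigma_2(f)$ has no $(-1)$-curve, and (b) the rays used in $\Sigma_2(f)$ are all primitive and irreducible, i.e. none of them can be deleted while keeping a refinement. For (a), I would compare the obtained dual graph with the list in \cite{Artin} of minimal resolution graphs of RTP-singularities; since both graphs resolve the same singularity, if the $\Sigma_2(f)$-graph has no $(-1)$-vertex then it must coincide with Artin's minimal graph. For (b), because $\Sigma_2(f)$ is obtained from the extremal rays of $DNP(f)$ by the standard continued-fraction refinement of each $2$-dimensional cone, every added ray corresponds to a Hilbert-basis element of that $2$-dimensional cone and is therefore irreducible.

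The main obstacle I expect is case (a): for the equations used in \cite{mag, bhcm}, extra $(-1)$-curves appeared — this is exactly why the earlier construction failed to be minimal — so the key point is that the new equations from \cite{ACMZ} are engineered to avoid this. Concretely, one must check for each family, especially the $B_{k,n}$-series (pointed out in the introduction as the problematic case in \cite{bhcm}) and the subcases of $A_{k,l,m}$ split by the parity and inequalities on $k,l,m$, that the compact faces of $NP(f)$ give normals whose continued-fraction expansions never produce a terminal $(-1)$-vertex. This is where the bulk of the work lies and is carried out exhaustively in the Appendix; the statement itself is then the conjunction of these nine computational verifications.
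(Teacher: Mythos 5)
Your proposal matches the paper's own argument: the paper likewise observes that the new equations are Newton non-degenerate, invokes Oka's algorithm to compute the dual graphs with self-intersections from the regular refinement $\Sigma_2(f)$ of the $2$-dimensional cones of $DNP(f)$, and uses the characterization of minimality for rational surface singularities by the absence of $-1$ curves, with the case-by-case computations relegated to the Appendix. Your additional remark (b) on the irreducibility of the added rays is harmless but not needed for this statement, which concerns only the abstract resolution.
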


\noindent  Recall that, when $X\subset \mathbb C^n$ is a surface with a rational singularity, the minimality of an abstract resolution is characterized by the fact that there is no $-1$ curve in the resolution. These new equations are Newton non-degenerate in the sense of Kouchnirenko, so one can show  by the Oka's algorithm \cite{oka} that the abstract resolution in each case is minimal (see the tables in the Appendix). Note that the equations given in \cite{mag} for the types $E$'s and $H_n$ are the same as the one given above and lead us to the minimal abstract resolution, which is not the case for the other types with the equations presented in  \cite{mag}.

\section{Minimal toric embedded resolutions: The $B_{k,n}$-singularities}
\subsection{Jet schemes and embedded valuations}

\noindent Let us recall few facts about the jet schemes and define the set $EV(X)$ of the embedded valuations, that will provide us the regular refinement of a given $DNP(f)$. Let $X\in \mathbb{C}^3$ be an hypersurface defined by one of the equations above. Let $m\in \mathbb{N}$. Consider the morphism
$$\varphi:\frac{\mathbb{C}[x,y,z]}{<f>} \to \frac{\mathbb{C}[t]}{<t^{m+1}>} $$

\noindent where  $x(t)=x_{0}+x_{1}t+x_{2}t^2+\ldots+x_{m}t^m$ \ \ (mod $t^{m+1})$

\ \ \ \ \ $y(t)=y_{0}+y_{1}t+y_{2}t^2+\ldots+y_{m}t^m$ \ \ \ (mod $t^{m+1})$

\ \ \ \ \ $z(t)=z_{0}+z_{1}t+z_{2}t^2+\ldots+z_{m}t^m$ \ \ \  (mod $t^{m+1})$

\ \

\noindent such that $f(x(t), y(t), z(t))=F_0+tF_1+\ldots+t^mF_m$ (mod $t^{m+1})$. The $m$-th jets scheme of $X$ is defined by
$$X_m=Spec(\frac{\mathbb{C}[x_i, y_i, z_i; \ \ i=1,\ldots,m]}{<F_0, F_1,\dots ,F_m>})$$
\noindent It is a finite dimensional scheme. For $n\in \mathbb{N}$ with $m>n$ we have a canonical projection $\pi_{m,n}: X_m \to X_n$. These affine morphisms verify $\pi_{m,p}\circ \pi_{q,m}=\pi_{q,p}$
for $p<m<q$ and they define a projective system whose limit is a scheme that we denote $X_{\infty}$, which is called the arcs space of $X$. Note that $X_0=X$. The canonical projection $\pi_{m,0}:X_m\longrightarrow X_0$ will be denoted by $\pi_{m}$. Denote also $X_{m}^{Y}:=\pi_m^{-1}(Y)$ for $Y\subset X$.  Consider the canonical morphism $\Psi_m : X_{\infty}\longrightarrow X_m$ and the truncation map ${\psi_m^a}: \mathbb{C}^3_\infty \longrightarrow \mathbb{C}^3_m$ associated with the ambient space $\mathbb{C}^3$, {{here the exponent $"a"$ stands for \it{ambient map} }}. The morphism ${\psi_m^a}$ is a trivial fibration, hence
${\psi_m^a}^{-1}(\mathcal{C}_{m})$ is an irreducible cylinder in $\mathbb{C}^3_\infty.$ Let $\eta$ be the generic point of ${\psi_m^a}^{-1}(\mathcal{C}_{m})$. { By Corollary 2.6 in} \cite{ELM},  the map  $\nu_{\mathcal{C}_m}:\mathbb C[x,y,z]\longrightarrow \mathbb{N}$ defined by  
$$ \nu_{\mathcal{C}_m}(h)=\mbox{ord}_t h\circ \eta $$
is a divisorial valuation on $\mathbb C^3.$ To each irreducible component $\mathcal{C}_m$ of $X_m^{Y}$, let us  associate a vector, called the weight vector, in the following way:
$$v(\mathcal{C}_m):=(\nu_{\mathcal{C}_m}(x),\nu_{\mathcal{C}_m}(y),\nu_{\mathcal{C}_m}(z))   \in \mathbb{N}^{3}.$$ 
\noindent Now, we want to characterize the irreducible components of $X_m^{Y}$ that will allow us to construct an embedded resolution of $X$.
For $p\in \mathbb{N},$ we consider the following cylinder in the arcs space:
$$Cont^p(f)=\{\gamma \in \mathbb{C}^3_\infty : \mbox{ord}_tf\circ \gamma=p\}.$$

\begin{defn} Let $X:\{f=0\}\subset \mathbb{C}^3$ be a surface. Let $Y$ be a subvariety of $X$.\\
	$(i)$ The elements of the set:
	$$EC(X):=\{\text{Irreducible components }\mathcal{C}_m \ \text{of} \ X_m^{Y} \ \text{such that }  {\psi_m^a}^{-1}(\mathcal{C}_{m})\cap Cont^{m+1}{f} \not = \emptyset$$
	$$\ \ \ \ \ \ \ \  \ \ \ \ \ \ \ \ \ \ \ \ \  \ \ \ \ \text{and } v(\mathcal{C}_m)\not=v(\mathcal{C}_{m-1})~~\text{for {any component}} ~~\mathcal{C}_{m-1} ~~\text{verifying}$$
	$$~~ 
	\pi_{m,m-1}(\mathcal{C}_m)\subset \mathcal{C}_{m-1}, m\geqslant 1 \}$$
	are called the \emph{essential components} for $X$. 
	
	\noindent $(ii)$ The elements of the set of associated valuations
	$$EV(X):=\{\nu_{\mathcal{C}_m},~~\mathcal{C}_m\in EC(X) \}$$
	are called \emph{embedded valuations} for $X$. 
\end{defn}

\noindent In \cite{bhcm} the authors explicitly construct the jet graphs and embedded resolutions for all cases of RTP-singularities; but the abstract resolutions of the singularities of types $A, B, C, D$ and $F$ were containing at least one curve with self-intersection $-1$ which is not the case for the new equations. Moreover, the equation of $B$-type singularities given in \cite{bhcm} is very particular since its jet graph provides  a resolution which is not a refinement of the $DNP(f)$. In this article, we find a toric embedded resolution with the help of the jet graph of the new equation for $B$-type singularities.  We also show that the vectors obtained from the jets are irreducible by showing that they are inside the profile, more exactly they reach  hypersurfaces that form a new convex subcone inside the profiles, that we call {\it subprofils}; for geometrical reason, the vectors will be in $G_\sigma$ for each cone $\sigma$.
In the sequel we present the entire computations for $B$-type singularities, the results for the other cases are collected in a table (see Appendix).

\subsection{$B_{k,n}$-singularities}

Consider the hypersurface $X\subset \mathbb{C}^3$ having $B_{k,n}$ singularities, means its defining equation is $f=x^{2n+3}z-x^{r}y^2-y^2z=0$ for $k=2r-1$ or $f=x^{n+r+2}y-x^{2n+3}z+y^2z=0$ for $k=2r$ (given in the list above). 

\begin{rem}
	Comparing with \cite{mag, bhcm}, we see that we only have two cases to treat instead of five cases. Moreover the computation process is simpler since, in both cases the $NP(f)$ admits a unique compact face. 
\end{rem}

\noindent The $DNP(f)$ for $k=2r-1$ and $k=2r$ are as follows: 
\begin{figure}[h]
	\centering
	\includegraphics[width=0.9\textwidth]{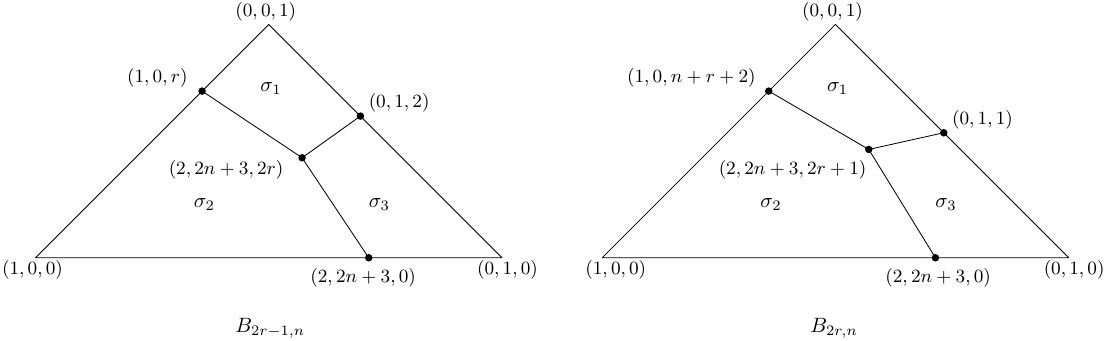}
	\caption{$DNP(f)$ of $B_{k,n}$-singularities}
\end{figure}

\begin{thm} \label{thm1} For $B_{2r-1,n}$-singularities, the embedded valuations of $X$ are
	
	$\bullet$ $(1,0,1), (1,0,2), \ldots, (1,0,r)$
	
	$\bullet$ $(2,2n+3,0),(2,2n+3,1),\ldots,(2,2n+3,2r)$
	
	$\bullet$ $(0,1,1),(0,1,2), (1,n+2,r+1)$
	
	$\bullet$ $(1,s,0),(1,s,1),\ldots,(1,s,r)$ with $1\leq s \leq n+2$
	
	\noindent and,  for $B_{2r,n}$-singularities, the embedded valuations of $X$ are
	
	$\bullet$ $(1,0,1), (1,0,2), \ldots, (1,0,n+r+2)$
	
	$\bullet$ $(2,2n+3,0),(2,2n+3,1),\ldots,(2,2n+3,2r+1)$
	
	$\bullet$  $(0,1,1), (1,n+2,r+1)$
	
	$\bullet$ $(1,1,0), (1,1,1),\ldots,(1,1,n+r+1)$
	
	$\bullet$ $(1,2,0), (1,2,1),\ldots,(1,2,n+r)$
	
	\ \ \ \ \ \ \ $\vdots$
	
	$\bullet$ $(1,n+2,0),(1,n+2,1),\ldots,(1,n+2,r).$\\
	
	\noindent In both cases the embedded valuations give a toric embedded resolution of $X$ and the vectors on the skeleton gives the minimal abstract resolution graph of the singularity.
\end{thm}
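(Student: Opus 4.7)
My strategy begins by exploiting the simplification highlighted in the preceding remark: in both cases $NP(f)$ has a single compact face, so $DNP(f)$ is determined by the three standard rays $e_1,e_2,e_3$ together with the primitive normal $w_0$ to that face. For $B_{2r-1,n}$, requiring $\langle w_0,\cdot\rangle$ to take the same value on the three face-vertices $(2n+3,0,1)$, $(r,2,0)$, $(0,2,1)$ yields $w_0=(2,2n+3,2r)$; for $B_{2r,n}$, the analogous computation with vertices $(n+r+2,1,0)$, $(2n+3,0,1)$, $(0,2,1)$ gives $w_0=(2,2n+3,2r+1)$. These are exactly the ``deepest'' vectors in the lists of the theorem, so the global structure of $DNP(f)$ is transparent from the start.

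I would then compute, by induction on $m$, the essential components of $X_m^Y$ for $Y=\mathrm{Sing}(X)$. Each irreducible component $\mathcal{C}_m$ is naturally indexed by the triple of $t$-orders $(a,b,c)$ of the coordinate functions of its generic arc, and the weight vector $v(\mathcal{C}_m)$ is precisely $(a,b,c)$. Expanding $f(x(t),y(t),z(t))=\sum_i F_i t^i$ and running systematically over admissible triples compatible with the various strata of $Y$ (the $x$-axis, and, for $B_{2r-1,n}$, also the $y$-axis), one obtains families corresponding to: arcs with $y\equiv 0$ to high order giving the valuations $(1,0,j)$; arcs with $x\equiv 0$ giving $(0,1,1)$ and, for $B_{2r-1,n}$, additionally $(0,1,2)$; arcs realizing the generic behaviour on the compact face giving the ``deep'' family $(2,2n+3,j)$; the mixed family $(1,s,j)$; and the distinguished valuation $(1,n+2,r+1)$. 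At each level $m$, the conditions ${\psi_m^a}^{-1}(\mathcal{C}_m)\cap \mathrm{Cont}^{m+1}(f)\neq\emptyset$ and $v(\mathcal{C}_m)\neq v(\mathcal{C}_{m-1})$ are checked to retain only the essential components. I expect this bookkeeping, recorded in the Appendix tables, to be the main technical obstacle: the case analysis must track every new arc family without double-counting across the levels $m$.

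Once the list of embedded valuations is in hand, the remaining steps are essentially verifications. I would check that the valuations together with $e_1,e_2,e_3$ form a regular refinement $\Sigma$ of $DNP(f)$ by listing the maximal $3$-dimensional simplicial subcones of $\Sigma$ and confirming that the determinant of each generator matrix is $\pm 1$; the consecutive unit increments in the coordinates of the families $(1,s,j)$, $(1,0,j)$, $(2,2n+3,j)$ make this straightforward. Theorem~\ref{NND}, applied to the Newton non-degenerate $f$, then yields that the associated toric morphism is an embedded resolution of $X$. Finally, the vectors lying on the skeleton of $DNP(f)$ are exactly those arising from $\Sigma_2(f)$, so the minimality of the induced abstract resolution graph is immediate from the Section~3 theorem on minimal abstract resolutions.
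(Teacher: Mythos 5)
Your overall architecture coincides with the paper's: identify the structure of $DNP(f)$ (your computation of the primitive normals $(2,2n+3,2r)$ and $(2,2n+3,2r+1)$ to the unique compact face is correct and matches the apex vectors of the cones $\sigma_1,\sigma_2,\sigma_3$), enumerate the essential components level by level to read off $EV(B_{k,n})$ from the jet graph, verify regularity of the induced refinement by the $3\times 3$ unimodularity checks, invoke Theorem~\ref{NND} for the embedded resolution, and use Oka's algorithm for minimality of the abstract graph. The paper's proof does exactly this, with the jet graphs displayed as figures and the determinants listed explicitly.

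However, the description of the jet-scheme step contains errors that would derail the computation. First, for both equations the singular locus is the $z$-axis $\{x=y=0\}$ (on the $x$- and $y$-axes one has $f_z=x^{2n+3}-y^2\neq 0$, resp.\ $f_z=-y^2\neq0$ away from the origin), not ``the $x$-axis and the $y$-axis'' as you assert. Second, the dictionary between vanishing orders and weight vectors is inverted: a component with weight vector $(1,0,j)$ has $\nu_{\mathcal{C}_m}(y)=0$, i.e.\ its generic arcs have $y(0)\neq 0$ and are centered on the $y$-axis; likewise $(0,1,j)$ corresponds to arcs along which $x$ is a unit, not to ``arcs with $x\equiv 0$''. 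Third, and most consequentially, if you take $Y=\mathrm{Sing}(X)$ literally, then every component of $X_m^{Y}$ satisfies $\nu(x)\geq 1$ and $\nu(y)\geq 1$, so none of the boundary families $(1,0,j)$ and $(0,1,j)$ in the theorem's list can arise; your enumeration would therefore miss the vectors needed to refine the $2$-dimensional faces of $DNP(f)$ lying in the coordinate planes. To recover the full list one must center the jets along the larger subvariety $X\cap\{xyz=0\}$ (as in \cite{bhcm,hc}), and this choice of $Y$ is an essential, not cosmetic, part of the argument. With that correction, the remaining verification steps you propose are exactly those carried out in the paper.
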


\begin{proof}
	In order to give the elements of $EV(B_{k,n})$, we compute the jet graph of the singularity as in \cite{bhcm,hc}. The jet graph of $B_{2r-1,n}$-singularities is 
	\begin{figure}[h]
		\centering
		\includegraphics[width=0.7\textwidth]{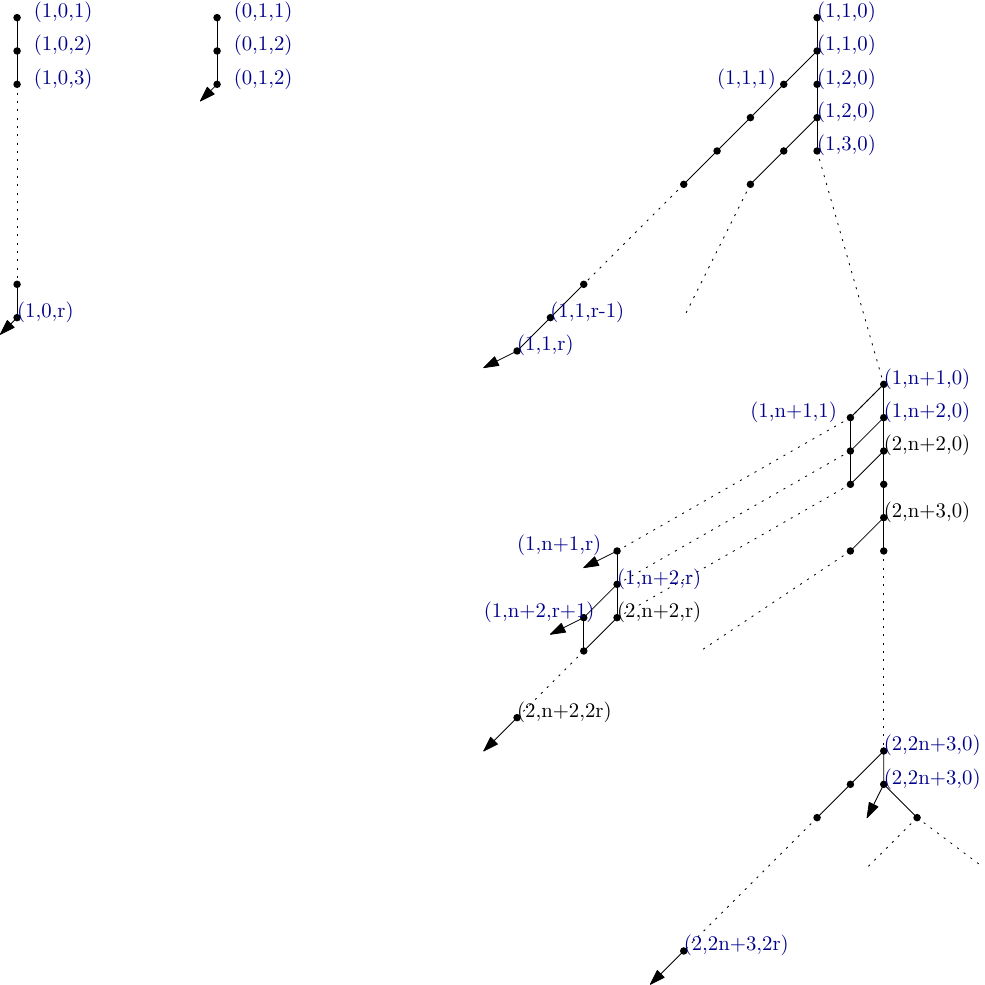}, 
		\caption{Jet Graph of $B_{2r-1,n}$-singularities}
	\end{figure}
	
	\newpage
	
	\noindent and the jet graph of $B_{2r-1,n}$-singularities is
	
	\begin{figure}[h]
		\centering
		\includegraphics[width=0.7\textwidth]{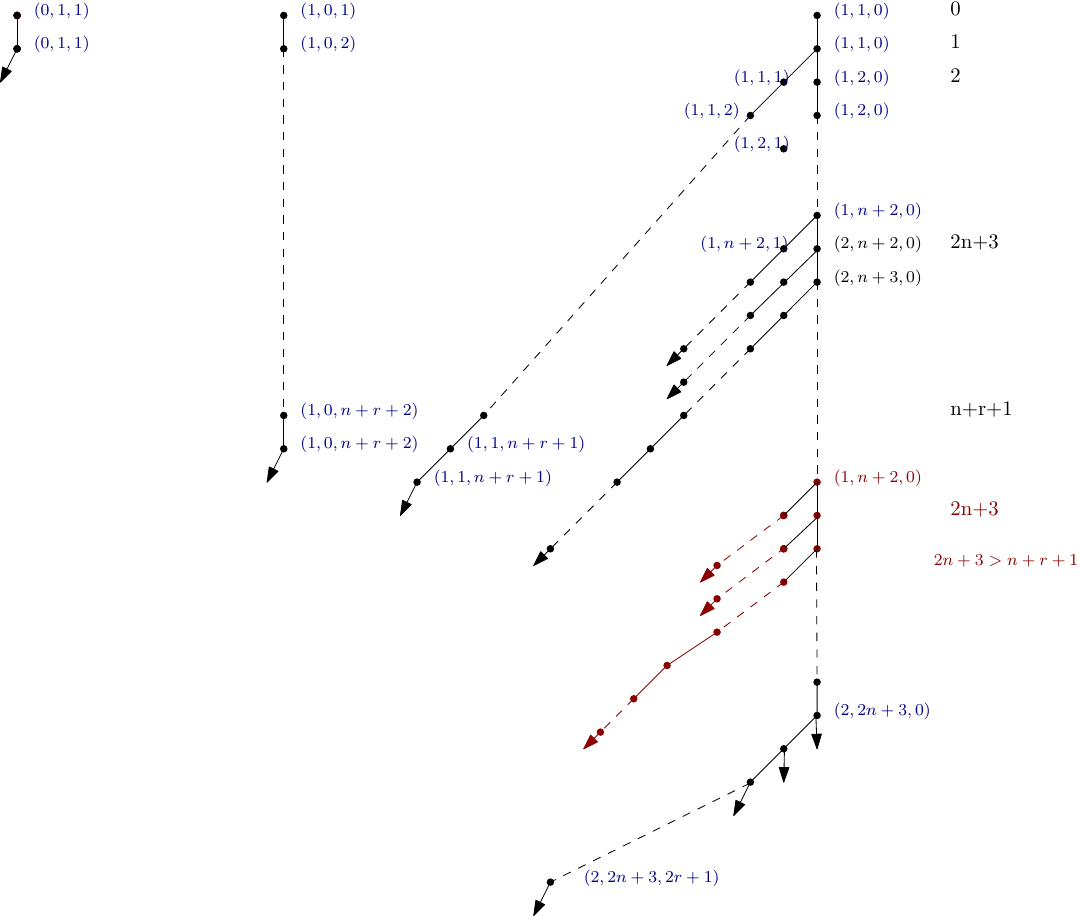}
		\caption{Jet Graph of $B_{2r,n}$-singularities}
	\end{figure}
	
	\noindent The vectors in the set $EV(B_{k,n})$ gives a regular refinement of the $DNP(f)$. They are the vectors written in blue in the jet graphs. A (simplicial) regular refinement of each subcone in the $DNP(f)$ for $B_{2r-1,n}$-singularities with these elements is illustrated in the following figure:
	\begin{figure}[h]
		\centering
		\includegraphics[width=0.7\textwidth]{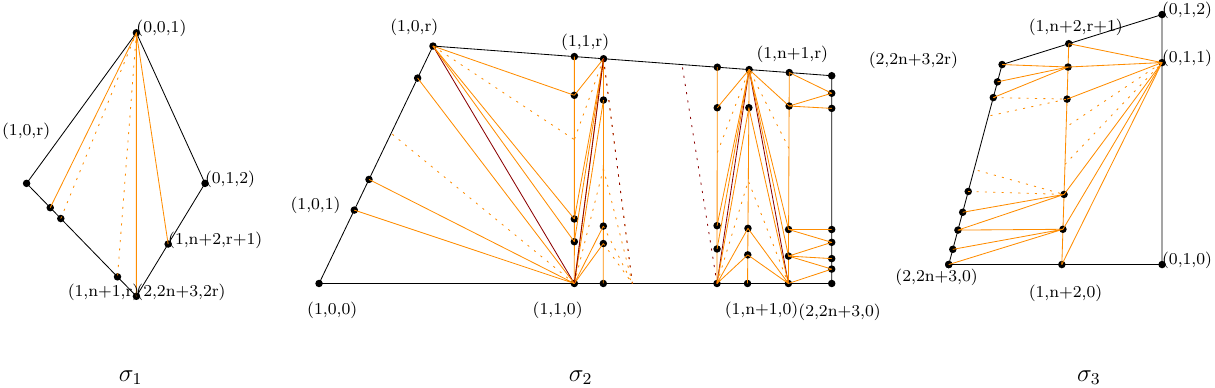}
		\caption{Resolution of $B_{2r-1,n}$-singularities}
	\end{figure}
	
	\noindent The refinement of $\sigma_1$ in $DNP(f)$ is regular since we have 
	{\tiny $\left \vert \begin{array}{ccc}
			\  0&1&1\\
			\ 0&s&s+1\\
			\ 1&r&r\\
		\end{array}\right \vert=1$} for $0\le s\le n$ and  also 	{\tiny
		$\left \vert \begin{array}{ccc}
			0&0&1\\
			0&1&n+2\\
			1&2&r+1\\
		\end{array}\right \vert=\left \vert \begin{array}{ccc}
			0&2&1\\
			0&2n+3&n+2\\
			1&2r&r+1\\
		\end{array}\right \vert=\left \vert \begin{array}{ccc}
			\  0&2&1\\
			\ 0&2n+3&n+1\\
			\ 1&2r&r\\
		\end{array}\right \vert=1$.}

	\noindent For the regularity of $\sigma_2$ in $DNP(f)$, we look at two subcones:
	
	\noindent For $<(1,n+1,0),(1,n+1,r),(2,2n+3,2r),(2,2n+3,0)>$,    
	{\tiny
		$\left \vert \begin{array}{ccc}
			2&2&1\\
			2n+3&n+1&n+1\\
			2s+1&s+1&s\\
		\end{array}\right \vert=1$, $\left \vert \begin{array}{ccc}
			2&2&1\\
			2n+3&2n+3&n+1\\
			2s&2s+1&s\\
		\end{array}\right \vert=1,$ $\left \vert \begin{array}{ccc}
			2&2&1\\
			2n+3&2n+3&n+1\\
			2s&2s-1&s\\
		\end{array}\right \vert=1$} for $0\le s\le r-1$.

	\noindent And, for the subcone $<(1,n+1,0),(1,n+1,r),(1,0,r),(1,0,0)>$ we have\\
	
	{\tiny $\left \vert \begin{array}{ccc}
			\  1&1&1\\
			\ k&k&k+1\\
			\ l&l+1&r\\
		\end{array}\right \vert=1$} for $0\le l\le r$,  	{\tiny $\left \vert \begin{array}{ccc}
			\  1&1&1\\
			\ k&k&k-1\\
			\ l&l+1&r\\
		\end{array}\right \vert=1$}  for $0\le l\le r$,

	{\tiny $\left \vert \begin{array}{ccc}
			\  1&1&1\\
			\ k&k&k+1\\
			\ l&l+1&0\\
		\end{array}\right \vert=1$} for $0\le l\le r$, 	{\tiny $\left \vert \begin{array}{ccc}
			\  1&1&1\\
			\ k&k&k-1\\
			\ l&l+1&0\\
		\end{array}\right \vert=1$} for $0\le l\le r$.   \\\\
	
	\noindent {Finally for the regularity of $\sigma_3$ in $DNP(f)$,} we look at the subcone $<(1,n+2,0),(1,n+2,r+1),(2,2n+3,2r),(2,2n+3,0)>$ for which we have, for all $0\le s\le r-1$\\
	
	\noindent {\tiny $\left \vert \begin{array}{ccc}
			\  2&2&1\\
			\ 2n+3&2n+3&n+2\\
			\ 2s&2s+1&s\\
		\end{array}\right \vert=1$,  $\left \vert \begin{array}{ccc}
			\  2&2&1\\
			\ 2n+3&2n+3&n+2\\
			\ 2s&2s-1&s\\
		\end{array}\right \vert=1$} and {\tiny $\left \vert \begin{array}{ccc}
			\  2&2&1\\
			\ 2n+3&n+2&n+2\\
			\ 2s+1&s+1&s\\
		\end{array}\right \vert=1$}.\\
	
	\noindent and the subcone $<(1,n+2,0),(1,n+2,r+1),(0,1,2),(0,1,0))>$ has, for $0\le l\le r$\\
	
	{\tiny $\left \vert \begin{array}{ccc}
			\  1&1&0\\
			\ n+2&n+2&1\\
			\ l&l+1&1\\
		\end{array}\right \vert=1$}. \\
	
	\noindent Hence $DNP(f)=\sigma_1\cup \sigma_2\cup \sigma_3$ is regular. A similar computation gives a regular refinement for the $B_{2r,n}$-singularities. Using Oka's algorithm, we can compute self-intersections and genus of the corresponding curves, and show that we get the minimal abstract resolution.
	
\end{proof}

\begin{thm}
	\label{thm2}
	The vectors in $EV(B_{k,n})$ lives inside the profiles of $B_{k,n}$ singularities. More precisely, for each subcones in $DNP(f)$ there exists  hypersurfaces inside each profile which is reached by the vectors in $EV(B_{k,n})$. Moreover the vectors in each subcones are free over $\mathbb{Z}$.
\end{thm}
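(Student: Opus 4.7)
My approach is to handle the three cones $\sigma_1,\sigma_2,\sigma_3$ of the decomposition $DNP(f)=\sigma_1\cup\sigma_2\cup\sigma_3$ (and the analogous decomposition for $k=2r$) one at a time, and within each cone to exhibit an explicit family of hyperplanes, intersecting the cone in the sub-profiles whose union contains the essential valuations. First, I would read off from Figure~3 the extremal vectors of each $\sigma_i$ and, in the simplicial case, write down the defining linear functional $l_{\sigma_i}$; in the non-simplicial case, I would express $p_{\sigma_i}$ via the finitely many half-spaces cutting out the convex hull of the extremal vectors, as permitted by the Remark following the extended definition of the profile.

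Next, I would partition the essential valuations according to the pattern visible in Theorem~\ref{thm1}: vectors on the $xz$-plane ($(1,0,j)$), on the plane $2y=(2n+3)x$ (the vectors $(2,2n+3,j)$), on planes of the form $y=sx$ for fixed $s\in\{1,\ldots,n+2\}$, and the isolated vector $(1,n+2,r+1)$. For each family I would write down a linear form in $(x,y,z)$ that vanishes on it, verify that this form cuts $p_{\sigma_i}$ in a lower-dimensional convex slice bounded by the extremal vectors of $\sigma_i$, and check that all the listed essential valuations indeed lie on these slices. This reduces the geometric claim to a purely numerical verification, checking for each tabulated vector in $EV(B_{k,n})$ that its coordinates satisfy one of the hyperplane equations and that the $\lambda_i$-coefficients in its expansion along the extremal vectors of the corresponding $\sigma_i$ are all non-negative and sum to $\leq 1$.

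Irreducibility over the semigroup (i.e., membership in $G_{\sigma_i}$) would then follow by the convexity argument of Bouvier--Gonzalez-Sprinberg adapted to sub-profiles: if $v=n_1+n_2$ with $n_1,n_2\in\sigma_i\cap\mathbb{Z}^3\setminus\{0\}$, then since the sub-profile is a convex slice whose only lattice points are either extremal vectors of $\sigma_i$ or points $v$ with $l_{\sigma_i}(v)\in(0,1]$, the equation $l_{\sigma_i}(v)=l_{\sigma_i}(n_1)+l_{\sigma_i}(n_2)$ together with the explicit linear constraint defining the slice forces one of $n_1,n_2$ to lie outside the half-space delimiting the sub-profile, contradicting $n_1,n_2\in\sigma_i$. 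Proposition~2.11 (no lattice points in $p_{\sigma_i}$ outside $H_{\sigma_i}$), combined with this slicing, will make the argument go through.

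Finally, the $\mathbb{Z}$-freeness of the vectors in each $\sigma_i$ is essentially free: it is already encoded in the unimodularity determinants computed in the proof of Theorem~\ref{thm1}, which show that the maximal-dimensional subcones of the refinement have unimodular extremal systems, so any three of the essential valuations spanning such a subcone form a $\mathbb{Z}$-basis of $\mathbb{Z}^3$. I expect the main obstacle to be the bookkeeping for $\sigma_2$, which contains the bulk of the essential valuations (the families $(1,s,j)$ for $1\leq s\leq n+1$, $0\leq j\leq r$) and is non-simplicial with several distinct slicing hyperplanes; here a careful inductive description, parametrising sub-profiles by $s$, will be needed rather than an ad hoc listing. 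The $k=2r$ case is entirely parallel and I would run it in a second, shorter pass.
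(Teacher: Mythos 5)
Your plan follows the paper's own argument essentially verbatim: for each full-dimensional cone $\sigma_i$ of $DNP(f)$ the paper writes down the bounding hyperplane(s) of $p_{\sigma_i}$, exhibits explicit subprofile hyperplanes $H_j^{(i)}$, checks that every vector of $EV(B_{k,n})$ satisfies one of their equations, and invokes convexity of the union of subprofiles to conclude membership in $H_{\sigma_i}$ — exactly your second and third paragraphs. The only slight divergence is your final paragraph: in the paper ``free over $\mathbb{Z}$'' means irreducibility in the semigroup $\sigma_i\cap\mathbb{Z}^3$ (i.e.\ membership in $G_{\sigma_i}$), which is already delivered by your convexity argument, so the appeal to the unimodularity determinants of Theorem \ref{thm1} is not needed for this statement.
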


\begin{proof} For $B_{2r-1,n}$-singularities, let's look at the 3-dimensional subcones in $DNP(f)$:\\
	
	\noindent  {\bf For $\sigma_1=<(0,0,1),(1,0,r),(0,1,2),(2,2n+3,2r)>$,} the profile $p_{\sigma_1}$ is bounded by two hyperplanes which are
	$$H_1:(2n-2nr+3-3r)-y+(2n+3)z-(2n+3)=0\ \ {\rm and } \ \  H_2:(n-r+2)x-y+z-1=0$$
	Let $p_{\sigma_1}^1$ and $p_{\sigma_1}^2$ denote two cones bounded respectively by the hyperplanes $H_1^{(1)}:(r-1)x-z+1=0$ and $H_2^{(1)}:(n-r+2)x-y+z-1=0$. They form a convex hull inside the profile $p_{\sigma_1}$; we call them  (and by abuse of language, the hypersufaces too) {\bf subprofiles}. The coordinates of each vector in the set $\{(1,n+2,r+1), (1,1,n+r+1), (1,2,n+r), (1,3,n+r-1),\ldots,(1,n,r+2),(1,n+1,r+1)\}$ satisfies at least one of the equations defining $H_1^{(1)}$ and $H_2^{(1)}.$ Moreover  $p_{\sigma_1}^1\cup p_{\sigma_1}^2$ is convex. This implies that all the elements in the previous set are in $H_{\sigma_1}$.\\
	
	\noindent {\bf For $\sigma_2=<(1,0,0),(1,0,r),(2,2n+3,0),(2,2n+3,2r)>$,} the profile $p_{\sigma_2}$ is bounded by a unique hyperplane which is $H:(2n+3)x-y-(2n+3)=0$; it contains the vectors $(2,2n+3,1),(2,2n+3,2),\ldots,(2,2n+3,2r),(1,0,1),(1,0,2),\ldots,(1,0,n+r+1),(1,1,0),(1,1,1),(1,1,2),(1,1,3),\ldots,(1,1,n+r+1),(1,2,0),(1,2,1),\ldots,(1,2,n+r),(1,3,0),(1,3,1),\ldots,(1,3,n+r-1),\ldots,(1,n+1,0),(1,n+1,1),\ldots,(1,n+1,r+1)$. All these vectors including the generators are in the subprofile defined by two hyperplanes $H_{1}^{(2)}: x=1$ and $H_{2}^{(2)}: (n+2)x-y-1=0$.\\		
	
	\noindent {\bf For $\sigma_3=<(0,1,0),(0,1,2),(2,2n+3,0),(2,2n+3,2r)>$,} the profile $p_{\sigma_3}$ is bounded by a unique hyperplane $H: (n+1)x-y+1=0$; it contains the vectors $(2,2n+3,1),(2,2n+3,2),\ldots , (2,2n+3,2r),(1,n+2,0),(1,n+2,1),\ldots,(1,n+2,r+1) $. All these vectors including the generators belong to do subprofile defined by the hyperplane $H:(n+1)x-y+1=0$ (here profile and subprofile are the same).
	
	\begin{figure}[h]
		\centering
		\includegraphics[width=0.9\textwidth]{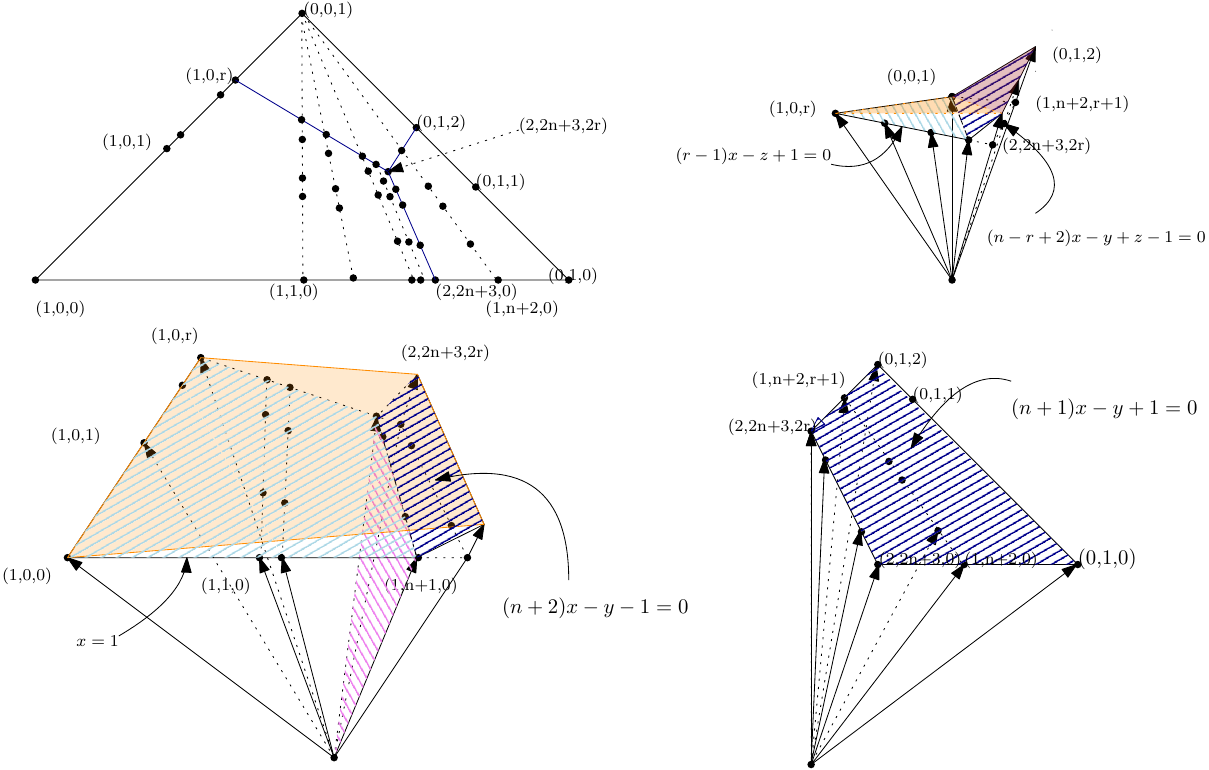}
		\caption{Profiles and subprofiles of $B_{2r-1,n}$-singularities}
	\end{figure}
	
	\noindent {\bf For $B_{2r,n}$-singularities}, the $DNP(f)$ and the $3$-dimensional subcones in it behave as in the following:\\
	
	\noindent  {\bf For $\sigma_1=<(0,0,1),(1,0,n+r+2),(0,1,1),(2,2n+3,2r+1)>$,} the profile $p_{\sigma_1}$ is bounded by two hyperplanes $H_1: (2n^2+2nr+5n+3r+3)x-(2n+2)y-(2n+3)z+(2n+3)=0$ and $H_2: rx-z+1=0$ (see figure below). It contains the vectors $(1,n+2,r+1), (1,1,n+r+2),(1,2,n+r),(1,3,n+r-1),\ldots,(1,n,r+2),(1,n+1,r+1)$. All these vectors including the generators are in the subprofile defined by the hyperplanes $H_{1}^{(1)}: (n^2+nr+2n+r+1)x-ny-(n+1)z+(n+1)=0$ and $H_{2}^{(1)}: rx-z+1=0$.\\
	
	\noindent {\bf For $\sigma_2=<(1,0,0),(1,0,n+r+2),(2,2n+3,0),(2,2n+3,2r+1)>$,} the profile $p_{\sigma_2}$ is bounded by the unique hyperplane $H: (2n+3)x-y-(2n+3)=0$. It contains the vectors 	$(2,2n+3,1), (2,2n+3,2),\ldots ,(2,2n+3,2r),(1,0,1), (1,0,2),\ldots ,(1,0,n+r+1), (1,1,0), (1,1,1), (1,1,2),(1,1,3),\ldots (1,1,n+r+1), (1,2,0),(1,2,1), \ldots , (1,2,n+r), (1,3,0), (1,3,1),\ldots,(1,3,n+r-1),\ldots , (1,n+1,0), (1,n+1,1),\ldots , (1,n+1,r+1)$ as all these vectors including the generators are in the subprofile defined by two byperplanes $H_1^{(2)}: x=1$ and $H_{2}^{(2)}: (n+2)x-y-1=0$.\\
	
	\noindent {\bf For $\sigma_3=<(0,1,0),(0,1,1),(2,2n+3,0),(2,2n+3,2r+1)>$,} the profile $p_{\sigma_3}$ is bounded by the unique hyperplane $H: (n+1)x-y+1=0$. It contains the vectors $(2,2n+3,1),(2,2n+3,2),\ldots,(2,2n+3,2r),(1,n+2,0),(1,n+2,1),\ldots,(1,n+2,r+1)$. All these vectors including the generators are in the subprofile defined by the hyperplane $H: (n+1)x-y+1=0$.
	
	\begin{figure}[h]
		\centering
		\includegraphics[width=0.9\textwidth]{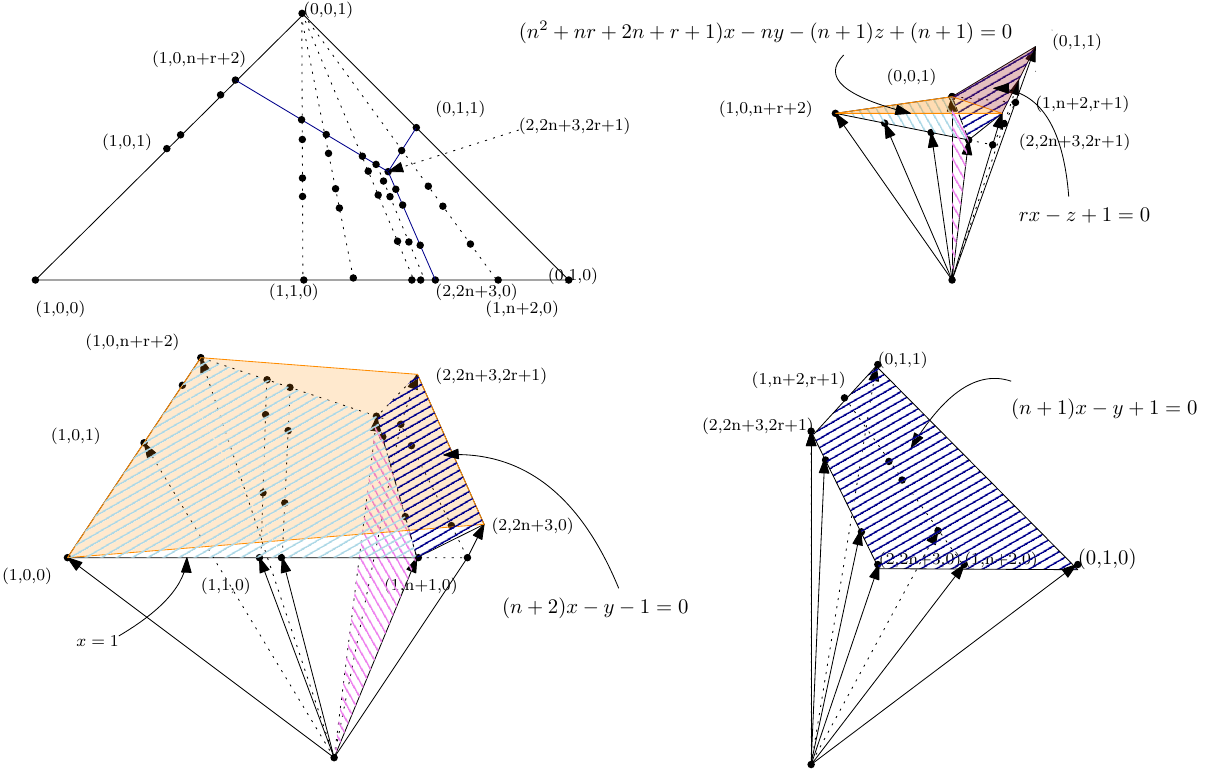}
		\caption{Profiles and subprofiles of $B_{2r,n}$-singularities}
	\end{figure}
\end{proof}

\begin{cor}
	Let $H_{DNP(f)}=H_{\sigma_1}\cup H_{\sigma_2}\cup H_{\sigma_3}$ be the Hilbert basis of $DNP(f)$. The elements of $EV(B_{k,n})$ are in $H_{DNP(f)}$ and give a minimal toric embedded resolution of the singularity.
\end{cor}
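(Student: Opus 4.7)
The plan is to combine Theorems \ref{thm1} and \ref{thm2} with the earlier Proposition characterising integer points of the profile, together with the Definition of a minimal toric embedded resolution. For each $3$-dimensional subcone $\sigma_j$ of $DNP(f)$ ($j=1,2,3$), Theorem \ref{thm2} places every element of $EV(B_{k,n})$ inside $p_{\sigma_j}$, and in fact inside an explicit convex subprofile delimited by the hyperplanes $H_i^{(j)}$. Since these embedded valuations are lattice vectors, the Proposition asserting that any integer point of $p_{\sigma_j}$ either coincides with an extremal vector or is an indecomposable element of $S_{\sigma_j}$ gives at once $EV(B_{k,n}) \cap \sigma_j \subset H_{\sigma_j}$. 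Taking the union over $j$ yields $EV(B_{k,n}) \subset H_{DNP(f)}$, which is the first assertion of the Corollary.

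For the second half, Theorem \ref{thm1} guarantees that $EV(B_{k,n})$ provides a regular refinement $\Sigma$ of $DNP(f)$, hence a toric embedded resolution of $X$, and that the associated abstract resolution contains no $(-1)$-curves. To match the Definition of a \emph{minimal} toric embedded resolution it remains to check the condition $G_\Sigma = \bigcup_j G_{\sigma_j}$. Now $G_{\sigma_j}$ coincides with $H_{\sigma_j}$ by construction, since both select exactly the indecomposable elements of the semigroup $S_{\sigma_j}$. Thus the equality amounts to $EV(B_{k,n}) = H_{DNP(f)}$. The inclusion $\subset$ has just been proved; the reverse inclusion follows from the same subprofile analysis of Theorem \ref{thm2}, where the sub-hulls $p_{\sigma_j}^1 \cup p_{\sigma_j}^2$ are chosen so that the vectors in $EV(B_{k,n})$ reach their bounding hyperplanes. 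Convexity of these sub-hulls forces $\mathbb{Z}$-freeness, which together with Theorem \ref{thm1}'s list ensures that no additional indecomposable lattice vector of $\sigma_j$ is missed.

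Consequently $EV(B_{k,n})$ is both contained in and equal to $H_{DNP(f)}$, so the Hilbert basis of $DNP(f)$ is precisely the set of rays of the regular refinement $\Sigma$ of Theorem \ref{thm1}, producing a toric embedded resolution that is minimal in the sense of the Definition. The main obstacle I anticipate is the reverse inclusion $H_{DNP(f)} \subset EV(B_{k,n})$: while the inclusion $EV \subset H_{DNP(f)}$ is immediate from the Proposition, excluding extra Hilbert basis elements requires a concrete lattice-point enumeration inside each subprofile bounded by the hyperplanes $H_i^{(j)}$ of Theorem \ref{thm2}, done separately for the parities $k = 2r-1$ and $k = 2r$. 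Once this bookkeeping is complete, the Corollary reduces to a direct comparison between the two explicit lists.
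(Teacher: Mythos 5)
Your first half matches the paper's (very terse) justification exactly: Theorem \ref{thm2} places the vectors of $EV(B_{k,n})$ inside the convex subprofiles, the Proposition on integer points of the profile then shows they are indecomposable, hence $EV(B_{k,n})\subset H_{DNP(f)}$; and Theorem \ref{thm1} supplies the regular refinement, i.e.\ the toric embedded resolution with no $-1$ curves. So far this is the same argument the authors give.

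Where you diverge is the reverse inclusion $H_{DNP(f)}\subset EV(B_{k,n})$, which is what makes the resolution \emph{minimal} ($G_\Sigma=\bigcup_\sigma G_\sigma$). You correctly flag this as the weak point, but the route you propose --- a lattice-point enumeration inside each subprofile, done separately for $k=2r-1$ and $k=2r$ --- is not what the paper does, and as written your argument does not close the gap: showing that the listed vectors are $\mathbb{Z}$-free (indecomposable) proves they lie in $H_{\sigma_j}$, but says nothing about whether some \emph{other} indecomposable lattice vector of $\sigma_j$ has been omitted. The paper closes this without any enumeration by invoking the theorem of Section~2: every element of $H_\sigma$ is an extremal vector (a ray) of \emph{any} regular refinement of $\sigma$. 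Since Theorem \ref{thm1} exhibits a regular refinement of $DNP(f)$ whose rays are precisely the generators together with $EV(B_{k,n})$, every Hilbert basis element must already occur among those rays, giving $H_{DNP(f)}\subset EV(B_{k,n})$ at once. You should replace the proposed case-by-case bookkeeping with this one-line appeal to that theorem; otherwise your proof of minimality remains incomplete.
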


\noindent In fact by \ref{thm2}, they are irreducible and by \ref{thm1}, the elements give a resolution and form exactly the Hilbert basis of $DNP(f)$. In other words;

\begin{cor}
	For a $B_{k,n}$-singularity with its new equation, the union of Hilbert basis of each full dimensional subcone in $DNP(f)$ is the resolution of the singularity.
\end{cor}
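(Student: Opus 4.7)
The plan is to establish the set equality $\bigcup_{i=1}^{3} H_{\sigma_i} = EV(B_{k,n})$; once this is in hand, the corollary is immediate, since Theorem \ref{thm1} already asserts that $EV(B_{k,n})$ yields a toric embedded resolution of $X$.

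For the forward inclusion $EV(B_{k,n}) \subset \bigcup_{i=1}^{3} H_{\sigma_i}$, I would recycle the argument of the preceding corollary. By Theorem \ref{thm2}, every essential valuation lies inside the profile $p_{\sigma_i}$ of the subcone that contains it, and in fact inside one of the subprofiles explicitly exhibited there. The Proposition of Section~2 then applies: the only integer points in $p_{\sigma_i}$ apart from its generators are the elements of $H_{\sigma_i}$. Since each generator of $\sigma_i$ is trivially primitive and hence sits in $H_{\sigma_i}$, every element of $EV(B_{k,n})$ is forced into $\bigcup_i H_{\sigma_i}$.

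For the reverse inclusion $\bigcup_i H_{\sigma_i} \subset EV(B_{k,n})$, the plan is to invoke Theorem~2.5, which says that any element of $H_{\sigma_i}$ must appear as an extremal vector (a $1$-dimensional ray) of any regular refinement of $\sigma_i$. By Theorem \ref{thm1}, the vectors in $EV(B_{k,n})$ furnish precisely such a regular refinement of each $\sigma_i$, and its extremal rays are (by construction of the jet-graph) exactly the elements of $EV(B_{k,n}) \cap \sigma_i$. Therefore any $v \in H_{\sigma_i}$ must coincide with one of these, giving $v \in EV(B_{k,n})$.

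The step I expect to be most delicate is handling the subcone $\sigma_1$, which is non-simplicial, so that one needs the extended definition of profile and the corresponding case of the Proposition of Section~2. However, the explicit lists of vectors produced in the proof of Theorem \ref{thm1} and the subprofile computation of Theorem \ref{thm2} already cover this case, so the argument goes through uniformly. Combining the two inclusions gives the equality, and the resolution statement follows by appealing once more to Theorem \ref{thm1}.
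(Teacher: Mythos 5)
Your proposal is correct and follows essentially the same route as the paper: the forward inclusion via Theorem \ref{thm2} together with the proposition that the only integer points of a profile are Hilbert basis elements, and the reverse inclusion via the theorem that Hilbert basis elements must appear as extremal rays of any regular refinement, applied to the refinement produced in Theorem \ref{thm1}. The paper compresses this into one sentence ("they are irreducible ... and form exactly the Hilbert basis"), so your write-up is just a more explicit version of the same argument.
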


\noindent For  all other RTP-singularities, we present the results in a table format (equations, subprofiles, vectors) 
in Appendix.

\begin{rem}
	For RDP-singularities, the profiles and subprofiles coincide (see  \cite{hc}).
\end{rem}

\section{Remarks on hypersurfaces with elliptic singularities}

\noindent Three natural questions arise from our algorithm applied in the previous sections:\\

\noindent $1)$ Does Hilbert basis give a toric embedded resolution for any Newton non-degenerate singularity?\\
$2)$ Let $\sigma $ be a 3-dimensional cone in $DNP(f)$.\\
$(a)$ Is it true for all rational singularities that each element in $H_{\sigma }$ lies inside $p_\sigma$? \\
$(b)$ Is there any singularities that some element in $H_{\sigma }$ lies outside the $p_{\sigma}$? \\

\noindent  For the first two questions, we don't have an answer yet but the answer for $2(b)$ is positive as the following example shows: Let $X$ be the hypersurface defined by  $f(x,y,z)=y^3+xz^2-x^4$ 
The dual Newton polyhedron $DNP(f)$ consists of three $3$-dimensional cones; these cones and their Hilbert bases are:

\begin{align*}
	\sigma_1 & =<e_1,e_3, u_1, u_2> & H_{\sigma_1} & =\{e_1,e_3,u_1,u_2,(1,1,1),(3,4,5)\}\\ 
	\sigma_2 & =<e_2,u_1,u_2>& H_{\sigma_2} & =\{e_2,u_1,u_2,(1,1,0),(2,1,0),(1,1,1),(2,3,3)\}\\
	\sigma_3 & =<e_2,e_3,u_2>& H_{\sigma_3} & =\{e_2,e_3,u_2,(1,2,2),(2,3,3),(3,4,5)\}
\end{align*}
\noindent where $u_1=(3,1,0), u_2=(6,8,9)$. The profile $p_{\sigma_3}$ of $\sigma_3$ is defined by the hyperplane $H: 8x-3y-3z+3=0$. But, the following figure shows that the element $(1,2,2)$ from $H_{\sigma_3}$ is outside of $p_{\sigma_3}$. The set $H_{DNP(f)}$ still give a minimal toric embedded resolution of the singularity.

\begin{figure}[h]
	\centering
	\includegraphics[width=0.7\textwidth]{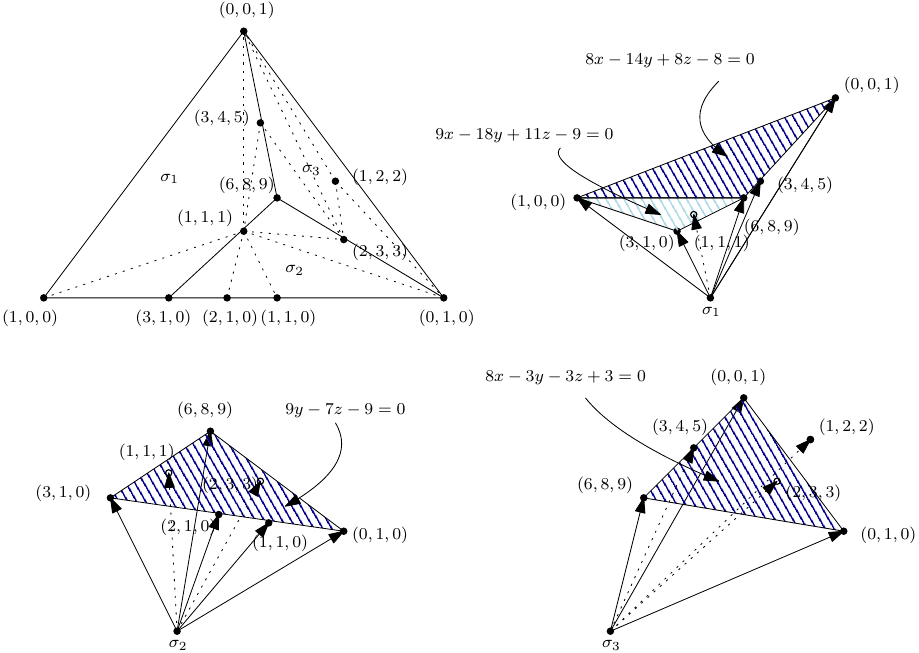}
	\caption{Profiles of the example}
\end{figure}

\newpage
\noindent Note that the hypersurface in this example has elliptic singularities and it is Newton non-degenerate. It is then natural to ask if it is a characterization of rational singularities, or just a question of choice of coordinates.

\section{Remarks on the Gr\"{o}bner fan of $X$}

\noindent Let $X$ be defined by $f(x,y,z)=0$. Let $\textbf{w}=(w_1,w_2,w_3)\in \mathbb{R}^3_{>0}$. The number
$$o_\textbf{w}(f):=min \{w_1a_1+w_2a_2+w_3a_3 \mid (a_1,a_2,a_3)\in S(f)\}$$
is called the $\textbf{w}$-order of $f$. The polynomial  
$$In_\textbf{w}(f):=\sum_{\{(a_1,a_2,a_3)\in S(f)\mid w_1a_1+w_2a_2+w_3a_3=o_w(f)\}} c_{(a_1,a_2,a_3)}x^{a_1}y^{a_2}z^{a_3}$$	
is called the $\textbf{w}$-initial form of $f$. We say that $\textbf{u}$ is equivalent to $\textbf{w}$ if $In_{\textbf{u}}(f)=In_{\textbf{w}}(f)$. The closure of the set
$$C_\textbf{w}(f):=\{ \textbf{u}\in {\mathbb R^3}\mid In_\textbf{w}(f)=In_{\textbf{u}}(f)\}$$
is a cone, called Gr\"{o}bner cone of $f$. The union of Gr\"{o}bner cones of $f$ form a fan, called the Gr\"{o}bner fan of $X$, denoted by $\mathcal{G}(X)$ (see \cite{Jensen} for more detailes), which  is introduced by T. Mora and L. Robbiano in \cite{Mora}. The full-dimensional cones in $\mathcal{G}(X)$ are in correspondence with the distinct monomials in $f$ \cite{Fukuda}. The set $$\mathcal{T}(f):=\{\textbf{u}=\in \mathbb{R}^3 \mid In_{\textbf{u}} (f) \ \text{is not a monomial} \}$$
\noindent is called the tropical variety of $f$.


\begin{prop}
	The tropical variety of an RTP-singularity is exactly the minimal abstract resolution of the singularity.
\end{prop}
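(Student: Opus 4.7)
The plan is to match the combinatorics of $\mathcal{T}(f)$ with that of the minimal abstract resolution graph by exploiting the duality between $NP(f)$ and its normal fan $DNP(f)$.

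First, I would unwind the definitions: $In_{\mathbf{u}}(f)$ is a monomial if and only if the linear functional $a\mapsto \mathbf{u}\cdot a$ attains its minimum on $S(f)$ at a unique lattice point. Since every vertex of $NP(f)$ lies in $S(f)$, this is equivalent to saying that $\mathbf{u}$ lies in the relative interior of a full-dimensional cone of $DNP(f)$. Consequently $\mathcal{T}(f)$ is the union of all cones of $DNP(f)$ of codimension at least one: its rays are dual to the $2$-dimensional compact faces of $NP(f)$, and its $2$-dimensional cones are dual to the $1$-dimensional compact faces. This equips $\mathcal{T}(f)$ with a natural graph structure whose vertices are the rays and whose edges are the $2$-cones.

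Second, I would invoke the Newton non-degeneracy theorem recalled in the introduction together with the main theorem of Section~3. A regular refinement $\Sigma_2(f)$ of the $2$-dimensional cones of $DNP(f)$ provides an abstract resolution of $X$ whose dual graph has one vertex for each ray of $\Sigma_2(f)$ and one edge for each $2$-cone. For the equations of RTP-singularities chosen in Section~3, this resolution is the minimal one (no $-1$-curves), and Oka's algorithm delivers the correct numerical data (genus and self-intersection).

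Combining these two observations gives the identification. The key -- and likely main -- obstacle is to verify that no auxiliary ray introduced merely for regularity of $\Sigma_2(f)$ is a genuine new vertex of the minimal graph; equivalently, the rays dual to the compact $2$-faces of $NP(f)$ are exactly the essential valuations picked out by the chosen equations. This is precisely the content of the minimality clause of the theorem of Section~3, and has been verified by Oka's algorithm in Section~4 for the family $B_{k,n}$ and type-by-type in the Appendix for the remaining RTP-singularities.
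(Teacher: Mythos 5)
Your first two paragraphs take a genuinely different and cleaner route than the paper. The paper proves the proposition by explicit enumeration: for each type (the details are written out for $B_{k,n}$) it lists the weight vectors $w_i$ whose initial forms are the various non-monomial truncations of $f$, writes down the corresponding Gr\"obner cones, and observes that their union coincides with the codimension-$\geq 1$ skeleton of $DNP(f)$ shown in the figure. Your duality argument --- $In_{\mathbf u}(f)$ is a monomial iff $\mathbf u$ lies in the interior of a maximal cone of the normal fan, hence $\mathcal T(f)$ is the codimension-$\geq 1$ skeleton of $DNP(f)$ --- obtains the same identification uniformly and without case analysis, and the appeal to the theorem of Section 3 for minimality is exactly what the paper also relies on.

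Your last paragraph, however, misidentifies the remaining issue and resolves it incorrectly. You propose to ``verify that no auxiliary ray introduced merely for regularity of $\Sigma_2(f)$ is a genuine new vertex of the minimal graph.'' The opposite is true: by the theorem of Section 3 the minimal abstract resolution is the one obtained from the \emph{refinement} $\Sigma_2(f)$, so the rays added inside the $2$-cones of $DNP(f)$ do correspond to exceptional curves of the minimal resolution (they are simply not $-1$-curves); for $B_{k,n}$ the minimal graph has far more vertices than $\mathcal T(f)$ has rays. Accordingly, your claim that ``the rays dual to the compact $2$-faces of $NP(f)$ are exactly the essential valuations'' contradicts Theorem \ref{thm1}, whose list of embedded valuations contains many vectors lying in the interiors of the $2$- and $3$-dimensional cones; and this is certainly not the content of the minimality clause of the Section 3 theorem. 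The proposition must instead be read as an equality of supports: refining the $2$-cones by interior rays does not change the underlying set, so $\mathcal T(f)\cap\mathbb R^3_{\geq 0}=|\Sigma_2(f)|$ automatically, and the Section 3 theorem then identifies this set with (the fan giving) the minimal abstract resolution. If you replace your final paragraph by this observation, the argument closes correctly.
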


\begin{proof} As before we provide the details for $B_{k,n}$-singularities:\\
	
	\noindent For $B_{2r-1,n}$-singularities, we look for all the vectors $w_i\in \mathbb{N}^3$, $1\leq i \leq 4$ for which $In_{w_1}(f)=f$, $In_{w_2}(f)=x^{2n+3}z-x^ry^2$, $In_{w_3}(f)=x^{2n+3}z-y^2z$ and $In_{w_4}(f)=-x^ry^2-y^2z$. This gives the following Gr\"{o}bner cones in $\mathcal{G}(X)$:
	
	${\bar{C}_{w_1}}=<(2,2n+3,2r)>$,
	
	${\bar{C}_{w_2}}=<(0,1,2),(2,2n+3,2r)>$,		
	
	${\bar{C}_{w_3}}=<(2,2n+3,0),(2,2n+3,2r)>$,		
	
	${\bar{C}_{w_4}}=<(1,0,r),(2,2n+3,2r)>$.\\
	
	\noindent	For $B_{2r,n}$-singularities, we look for all the vectors $w_i\in \mathbb{N}^3$, $1\leq i \leq 4$ for which  $In_{w_1}(f)=f$, $In_{w_2}(f)=x^{n+r+2}y-x^{2n+3}z$, $In_{w_3}(f)=-x^{2n+3}z+y^2z$and $In_{w_4}(f)=x^{n+r+2}y+y^2z$. This gives the following Gr\"{o}bner cones of $\mathcal{G}(X)$:
	
	${\bar{C}_{w_1}}=<(2,2n+3,2r+1)>$,
	
	${\bar{C}_{w_2}}=<(0,1,1),(2,2n+3,2r+1)>$,
	
	${\bar{C}_{w_3}}=<(2,2n+3,0), (2,2n+3,2r+1)>$,
	
	${\bar{C}_{w_4}}=<(1,0,n+r+2), (2,2n+3,2r+1)>$. \\

	\noindent In both cases, comparing with Figure $1$ above, the union $\bar{C}_{w_1}\cup \bar{C}_{w_2} \cup \bar{C}_{w_3} \cup \bar{C}_{w_4}$ is the abstract resolution of $B_{k,n}$-singularities.
	
\end{proof}

\begin{rem}
	Let $f$ defines an RDP-singularity. Let $\mathcal{J}(f)$ be the set of vector appearing in the jet graph of $f$. The intersection $\mathcal{G}(X)\cap \mathcal{J}(f)$ is exactly the Hilbert basis of $DNP(f)$, so gives the minimal toric embedded resolution of the singularity. This is not always true for RTP-singularities. For example, in the case of $E_{60}$-singularity, the vector $\textbf{w}=(2,3,3)$ for which $In_{w}(f)=z^3$ is in the intersection but it is not in Hilbert basis of $DNP(f)$. It is important to notice that this vector is not revealed in building the toric embedded resolution of the singularity. Hence $\mathcal{G}(X)\cap \mathcal{J}(f)$ also gives a toric embedded resolution of an RTP-singularity, which may not be minimal.
\end{rem}


{\scriptsize
	
	\renewcommand*{\arraystretch}{0,5}
	\begin{table}[htbp]
		\begin{sideways}
			\begin{tabular}{ | p{5cm}  | p{6cm} | p{11cm} | }
				\hline
				{\bf Type of  $f$ } &  {\bf Subprofiles} &  {\bf EV $\sim$ Hilbert Basis} \\
				
				\hline 
				\vskip.9cm $A_{k,l,m}: y^{3m+3}+xy^{m+1}z+xz^2-z^3=0$
				
				\ \
				
				$k=l=m>1$ &
				\vskip.2cm
				
				\ \

				$\includegraphics[width=6cm]{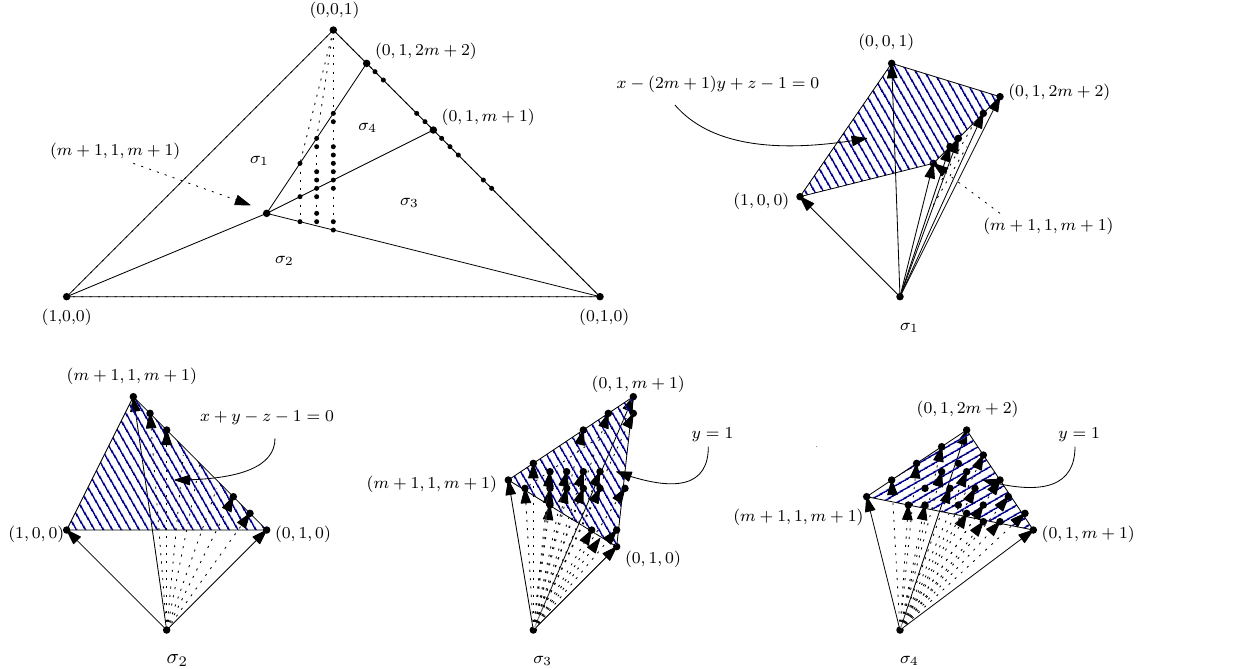}$ \caption {}&
				
				\ \
				
				\ \
				
				$H_{\sigma_1}=\{(1,0,0),(0,0,1),(0,1,2m+2),(m+1,1,m+1),(1,1,2m+1),(2,1,2m),$
				
				$(3,1,2m-1),\ldots,(m-1,1,m+3),(m,1,m+2)\} $
				
				$H_{\sigma_2}=\{(1,0,0),(0,1,0),(m+1,1,m+1),(1,1,1),(2,1,2),\ldots,(m-1,1,m-1),(m,1,m)\}$
				
				$H_{\sigma_3}=\{(0,1,0),(0,1,1),(1,1,1),(0,1,2),(1,1,2),(2,1,2),\ldots,(0,1,m),(1,1,,m),\ldots,$
				
				$(m,1,m),(0,1,m+1),(1,1,m+1),\ldots,(m+1,1,m+1)\}$
				
				$H_{\sigma_4}=\{(0,1,m+1),(0,1,m+2),\ldots,(0,1,2m+2),(1,1,m+1),(1,1,m+2),\ldots,$
				
				$(1,1,2m+1),\ldots,(m-2,1,m+1),(m-2,1,m+1),(m-2,1,m+2),\ldots,$
				
				$(m-2,1,m+4),(m-1,1,m+1),(m-1,1,m+2),(m-1,1,m+3),(m,1,m+1),$
				
				$(m,1,m+2),(m,1,m)\}$
				
				\\
				\hline
				
				\vskip.9cm $A_{k,l,m}:y^{k+l+m+3}+y^{2k+2}z+y^{k+1}z^2+xy^{k+1}z+xz^2-z^3=0 $  
				
				\ \
				
				$k=l<m$,
				
				\ \
				
				$k,l,m\geq 1$&
				\vskip.4cm
				
				\ \

				$\includegraphics[width=6cm]{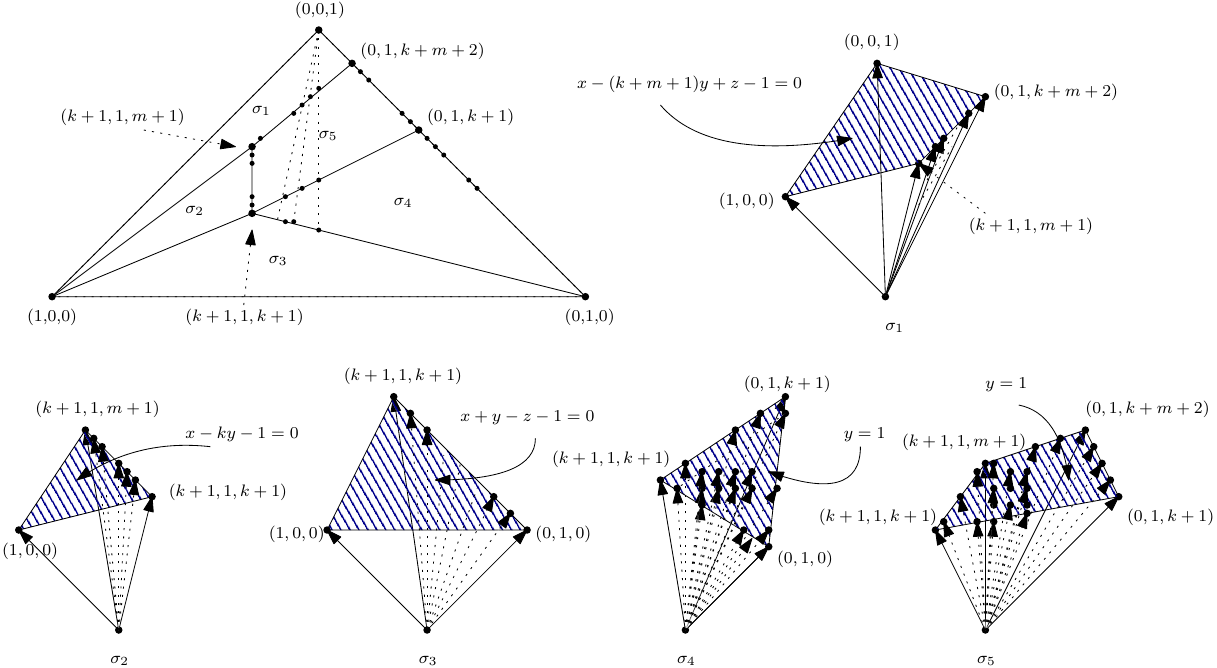}$  \caption{} & 
				
				\ \
				
				\ \
				
				$H_{\sigma_1}=\{(1,0,0),(0,0,1),(0,1,k+m+2),(1,1,k+m+1),(2,1,k+m),\dots,(k+1,1,m+1)\}$
				
				$H_{\sigma_2}=\{(1,0,0),(k+1,1,k+1),(k+1,1,k+2),\ldots,(k+1,1,m),(k+1,1,m+1)\}$
				
				$H_{\sigma_3}=\{(1,0,0),(0,1,0),(1,1,1),(2,1,2),\ldots,(k,1,k),(k+1,1,k+1)\}$
				
				$H_{\sigma_4}=\{(0,1,1),(1,1,1),(0,1,2),(1,1,2),(2,1,2),(0,1,3),\ldots,(3,1,3),(0,1,4),\ldots,$
				
				$(4,1,4),\ldots,(0,1,k),(1,1,k),\ldots,(k,1,k),(0,1,k+1),(1,1,k+1),\ldots,(k+1,1,k+1)\}$
				
				$H_{\sigma_5}=\{(0,1,k+1),(1,1,k+1),\ldots,(k+1,1,k+1),(0,1,k+2),(1,1,k+2),\ldots,$
				
				$(k+1,1,k+2),\ldots(0,1,m+1),(1,1,m+1),\ldots,(k+1,1,m+1),(0,1,m+2),$
				
				$(1,1,m+2),\ldots,(k,1,m+2),\ldots,(0,1,k+m),(1,1,k+m),(2,1,k+m),(0,1,k+m+1),$
				
				$(1,1,k+m+1),(0,1,k+m+2)\}$
				
				\\
				
				\hline
				
				\vskip.9cm $A_{k,l,m}: y^{3k}+y^{2k+m+l-2}-2y^{l+k}z-xy^kz+y^{m}z^2+xz^2-z^3=0 $  
				
				\ \
				
				$l<m<k$
				
				\ \
				
				$l+k>2m$ 
				
				\ \
				
				$m+l-2>k$ 
				
				\ \
				
				$k,l,m\geq 1$ &
				\vskip.4cm
				
				\ \
				
				\ \
				
				$\includegraphics[width=6cm]{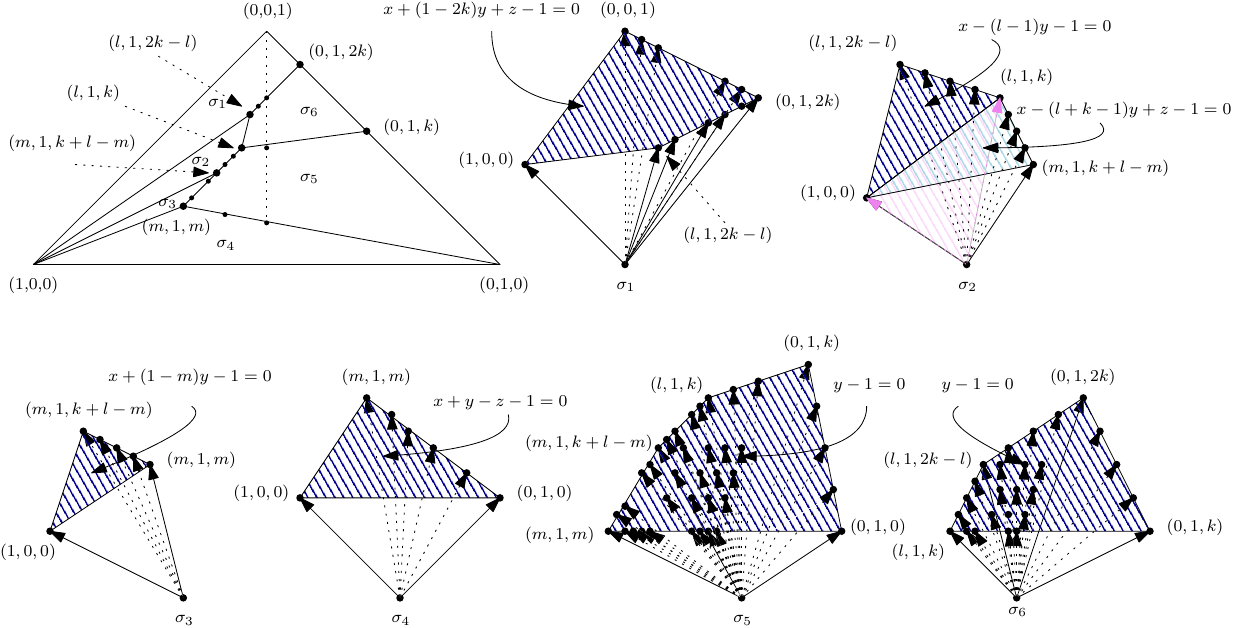}$  \caption{} &
				
				\ \
				
				\ \
				
				$H_{\sigma_1}=\{(1,0,0),(0,0,1),(l,1,2k-l),(0,1,2k),(0,1,1),(0,1,2),\ldots,(0,1,2k-1),$
				
				$(1,1,2k-1),(2,1,2k-2),\ldots,(l-1,1,2k-l+1)\}$
				
				$H_{\sigma_2}=\{(1,0,0),(m,1,k+l-m),(l,1,k),(l,1,2k-l),(m-1,1,k+l-m+1),\ldots,$
				
				$(l+1,1,k-1),(l,1,k+1),\ldots,(l,1,2k-l-1)\}$
				
				$H_{\sigma_3}=\{(1,0,0),(m,1,m),(m,1,k+l-m),(m,1,m+1),(m,1,m+2),\ldots,$
				
				$(m,1,k+l-m-1)\}$
				
				$H_{\sigma_4}=\{(1,0,0),(0,1,0),(m,1,m),(1,1,1),(2,1,2),\ldots,(m-1,1,m-1)\}$
				
				$H_{\sigma_5}=\{(0,1,0),(m,1,m),(m,1,k+l-m),(l,1,k),(0,1,k),(0,1,1),(0,1,2),\ldots,$
				
				$(0,1,k-1),(1,1,1),\ldots,(1,1,k),(2,1,2),\ldots,(2,1,k),\ldots,(l-1,1,l-1)\ldots,$
				
				$(l-1,1,k),(l,1,l),(l,1,l+1),\ldots,(l,1,k),(l+1,1,l+1),(l+1,1,l+2),\ldots,$
				
				$(l+1,1,k-1),\ldots,(m-2,1,m-2),\ldots(m-2,1,k+l-m+2),(m-1,1,m),\ldots,$
				
				$(m-1,1,k+l-m+1)\}$
				
				$H_{\sigma_6}=\{(0,1,k),(0,1,2k),(l,1,k),(l,1,2k-1),(0,1,k+1),(0,1,k+2),\ldots,$
				
				$(0,1,2k-1),(1,1,k),(1,1,k+1),\ldots,(1,1,2k-1),(2,1,k),(2,1,k+1),\ldots,$
				
				$(l-1,1,2k-l+1),(l,1,k+1),\ldots,(l,1,2k-l-1)\}$

				\\
				
				\hline
			\end{tabular}
		\end{sideways}
	\end{table}

	\renewcommand*{\arraystretch}{0,5}
	\begin{table}[htbp]
		\begin{sideways}
			\begin{tabular}{ | p{5cm}  | p{6cm} | p{11cm} | }
				\hline			
				{\bf Type of  $f$ } &  {\bf Subprofiles} &  {\bf EV $\sim$ Hilbert Basis} \\

				\hline
				\vskip.6cm $A_{k,l,m}: y^{3k}+y^{2k+m+l-2}-2y^{l+k}z-xy^kz+y^{m}z^2+xz^2-z^3=0 $  
				
				\ \
				
				$l<m<k$ 
				
				\ \
				
				$l+k>2m$ 
				
				\ \
				
				$m+l-2\leq k$
				
				\ \
				
				$k,l,m\geq 1$  &
				\vskip.4cm
				
				\ \

				$\includegraphics[width=6cm]{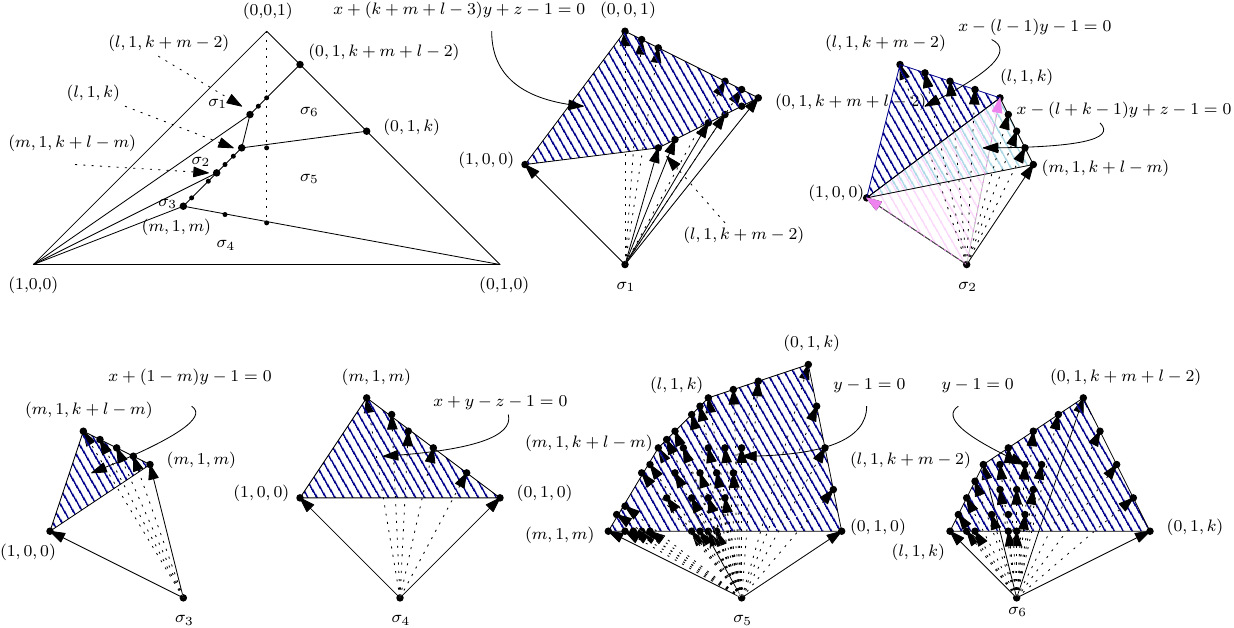}$   \caption{} & 
				
				\ \
				
				\ \
				
				$H_{\sigma_1}=\{(1,0,0),(0,0,1),(l,1,k+m-2),(0,1,k+m+l-2),(1,1,k+m+l-3),\ldots,$
				
				$(l-1,1,k+m-1),(0,1,1),\ldots,(0,1,2k-1)\}$
				
				$H_{\sigma_2}=\{(1,0,0),(m,1,k+l-m),(l,1,k),(l,1,k+m-2),(m-1,1,k+l-m+1),\ldots,$
				
				$(l+1,1,k-1),(l,1,k+1),\ldots,(l,1,k+m-3)\}$
				
				$H_{\sigma_3}=\{(1,0,0),(m,1,m),(m,1,k+l-m),(m,1,m+1),(m,1,m+2),\ldots,$
				
				$(m,1,k+l-m-1)\}$
				
				$H_{\sigma_4}=\{(1,0,0),(0,1,0),(m,1,m),(1,1,1),(2,1,2),\ldots,(m-1,1,m-1)\}$
				
				$H_{\sigma_5}=\{(0,1,0),(m,1,m),(m,1,k+l-m),(l,1,k),(0,1,k),(0,1,1),\ldots,$
				
				$(0,1,k-1),(1,1,1),\ldots,(1,1,k),\ldots,(l,1,l),(l,1,l+1),\ldots,(l,1,k),(l+1,1,l+1),\ldots,$
				
				$(l+1,1,k-1),\ldots,(m-1,1,m-1),\ldots,(m-1,1,k+l-m-1)\}$
				
				$H_{\sigma_6}=\{(0,1,k),(0,1,k+m+l-2),(l,1,k),(l,1,k+m-2),(0,1,k+m),$
				
				$(0,1,k+2),\ldots,(0,1,k+m+l-3),(1,1,k),(1,1,k+1),\ldots,(1,1,k+m+l-3),\ldots$
				
				$(l-1,1,k),(l-1,1,k+1),\ldots,(l-1,1,k+m-1),(l,1,k+1),(l,1,k+2),\ldots,(l,1,k+m-3)\}$
				
				\\
				\hline

				\vskip.5cm $A_{k,l,m}: y^{3k}+y^{2k+m+l-2}-2y^{l+k}z-xy^kz+y^{m}z^2+xz^2-z^3=0 $
				
				\ \
				
				$l<m<k$
				
				\ \
				
				$l+k\leq 2m$ 
				
				\ \
				
				$m+l-2>k$, $l+k$ is even 
				
				\ \
				
				$k,l,m\geq 1$  &
				\vskip.4cm
				
				\ \

				$\includegraphics[width=6cm]{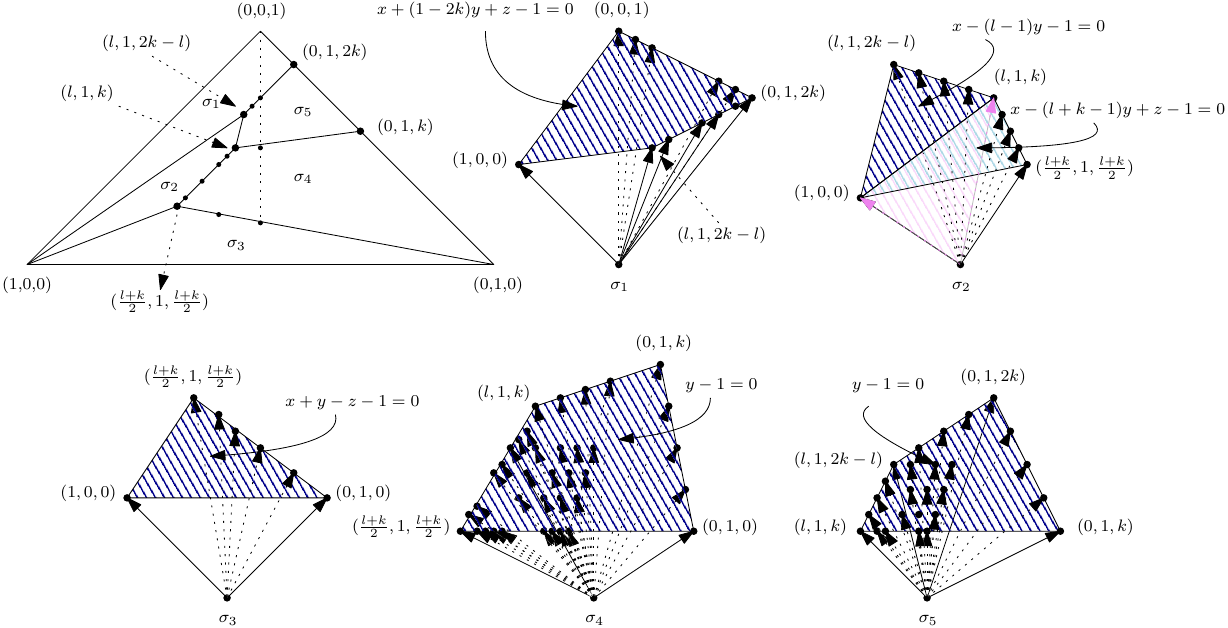}$   \caption{} &
				
				\ \
				
				\ \
				
				$H_{\sigma_1}=\{(1,0,0),(0,0,1),(0,1,2k),(l,1,2k-l),(0,1,1),(0,1,2),\ldots(0,1,2k-1),$
				
				$(1,1,2k-1),(1,1,2k-2),(1,1,2k-l+1)\} $
				
				$H_{\sigma_2}=\{(1,0,0),(\frac{l+k}{2},1,\frac{l+k}{2}),(l,1,k),(l,1,2k-l),(\frac{l+k}{2},1,\frac{l+k}{2}+1),(\frac{l+k}{2}-2,1,\frac{l+k}{2}+2),$
				
				$\ldots,(l+1,1,k-1),(l,1,k+1),\ldots,(l,1,2k-l-1)\}$
				
				$H_{\sigma_3}=\{(1,0,0),(0,1,0),(\frac{l+k}{2},1,\frac{l+k}{2}),(1,1,1),(2,1,2),\ldots,(\frac{l+k}{2}-1,1,\frac{l+k}{2}-1)\}$
				
				$H_{\sigma_4}=\{(0,1,0),(0,1,k),(1,1,k),(\frac{l+k}{2},1,\frac{l+k}{2}),(0,1,1),(0,1,2),\ldots,(0,1,k-1),(1,1,1),$
				
				$(1,1,2),\ldots,(1,1,k),\ldots,(l-1,1,l-1),(l-1,1,l),\ldots,(l-1,1,k),(l,1,l),(l,1,l+1),$
				
				$\ldots,(l,1,k-1),(l+1,1,l+1),(l+1,1,l+2),\ldots,(l+1,1,k-1),(l+2,1,l+2),(l+2,1,l+3),$
				
				\noindent$\ldots,(l+2,1,k-2),\ldots,(\frac{l+k}{2},1,\frac{l+k}{2}-1),\ldots,(\frac{l+k}{2}-1,1,\frac{l+k}{2}+1)\}$
				
				$H_{\sigma_6}=\{(0,1,k),(0,1,2k),(l,1,k),(l,1,2k-l),(0,1,k+1),(0,1,k+2),\ldots(0,1,2k-1),$
				
				$(1,1,k),\ldots,(1,1,2k-1),\ldots,(l-1,1,k),(l-1,1,k+1),\ldots,(l-1,1,2k-l+1),$
				
				$(l,1,k+1),(l,1,k+2),\ldots,(l,1,2k-l-1)\}$
				
				\\

				\hline
				\vskip.5cm $A_{k,l,m}: y^{3k}+y^{2k+m+l-2}-2y^{l+k}z-xy^kz+y^{m}z^2+xz^2-z^3=0 $  
				
				\ \
				
				$l<m<k$
				
				\ \
				
				$l+k\leq 2m$ 
				
				\ \
				
				$m+l-2\leq k$, $l+k$ is even 
				
				\ \
				
				$k,l,m\geq 1$   &
				\vskip.4cm
				
				\ \

				$\includegraphics[width=6cm]{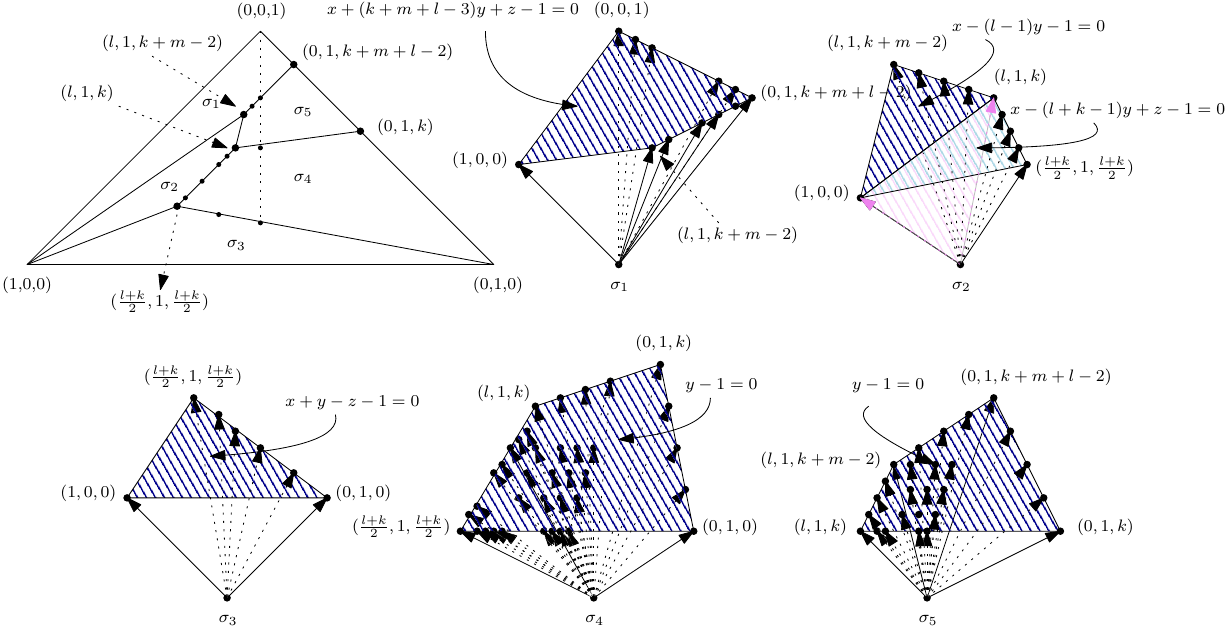}$   \caption{} &
				
				\ \
				
				\ \
				
				$H_{\sigma_1}=\{(1,0,0),(0,0,1),(0,1,k+m+l-2),(l,1,k+m-2),(0,1,1),(0,1,2),\ldots,$
				
				$(0,1,k+m+l-3),(1,1,k+m+l-3),(2,1,k+m+l-4),\ldots,(l-1,1,k+m-1)\} $
				
				$H_{\sigma_2}=\{(1,0,0),(\frac{l+k}{2},1,\frac{l+k}{2}),(l,1,k),(l,1,k+m-2),(\frac{l+k}{2}-1,1,\frac{l+k}{2}+1),$
				
				\noindent$(\frac{l+k}{2}-2,1,\frac{l+k}{2}+2),\ldots,(l+1,1,k-1),(l,1,k+1),\ldots,(l,1,k+m-3)\}$
				
				$H_{\sigma_3}=\{(1,0,0),(0,1,0),(\frac{l+k}{2},1,\frac{l+k}{2}),(1,1,1),\ldots,(\frac{l+k}{2}-1,1,\frac{l+k}{2}-1)\}$
				
				$H_{\sigma_4}=\{(0,1,0),(0,1,k),(1,1,k),(\frac{l+k}{2},1,\frac{l+k}{2}),(0,1,1),(0,1,2),\ldots,(0,1,k-1),(1,1,1),$
				
				$(1,1,2),\ldots,(1,1,k),\ldots,(l-1,1,l-1),(l-1,1,l),\ldots,(l-1,1,k),(l,1,l),(l,1,l+1),$
				
				$\ldots,(l,1,k-1),(l+1,1,l+1),(l+1,1,l+2),\ldots,(l+1,1,k-1),(l+2,1,l+2),(l+2,1,l+3),$
				
				\noindent$\ldots,(l+2,1,k-2),\ldots,(\frac{l+k}{2}-1,1,\frac{l+k}{2}-1),\ldots,(\frac{l+k}{2}-1,1,\frac{l+k}{2}+1)\}$
				
				$H_{\sigma_5}=\{(0,1,k),(0,1,k+m+l-2),(l,1,k),(l,1,k+m-2),(0,1,k+1),(0,1,k+2),$
				
				$\ldots(0,1,k+m+l-3),(1,1,k),(1,1,k+1),\ldots,(1,1,k+m+l-3),\ldots,(l-1,1,k),$
				
				$(l-1,1,k+1),\ldots,(l-1,1,k+m-1),(l,1,k+1),(l,1,k+2),\ldots,(l,1,k+m-3)\}$
				
				\ \			
				\\
				
				\hline
				
			\end{tabular}
		\end{sideways}
	\end{table}

	\renewcommand*{\arraystretch}{0,5}
	\begin{table}[htbp]
		\begin{sideways}
			\begin{tabular}{ | p{5cm}  | p{6cm} | p{11cm} | }
				\hline			
				{\bf Type of  $f$ } &  {\bf Subprofiles} &  {\bf EV $\sim$ Hilbert Basis} \\
				\hline
				\vskip.5cm $A_{k,l,m}: y^{2k+m}+y^{m+k}z-y^{l+k}z+xy^kz+-y^kz^2+y^lz^2+xz^2-z^3=0 $  
				
				\ \
				
				$l<m<k$
				
				\ \
				
				$l+k\leq 2m$ 
				
				\ \
				
				$l+k$ is odd 
				
				\ \
				
				$k,l,m\geq 1$ &
				
				\vskip.4cm
				
				\ \
				
				$\includegraphics[width=6cm]{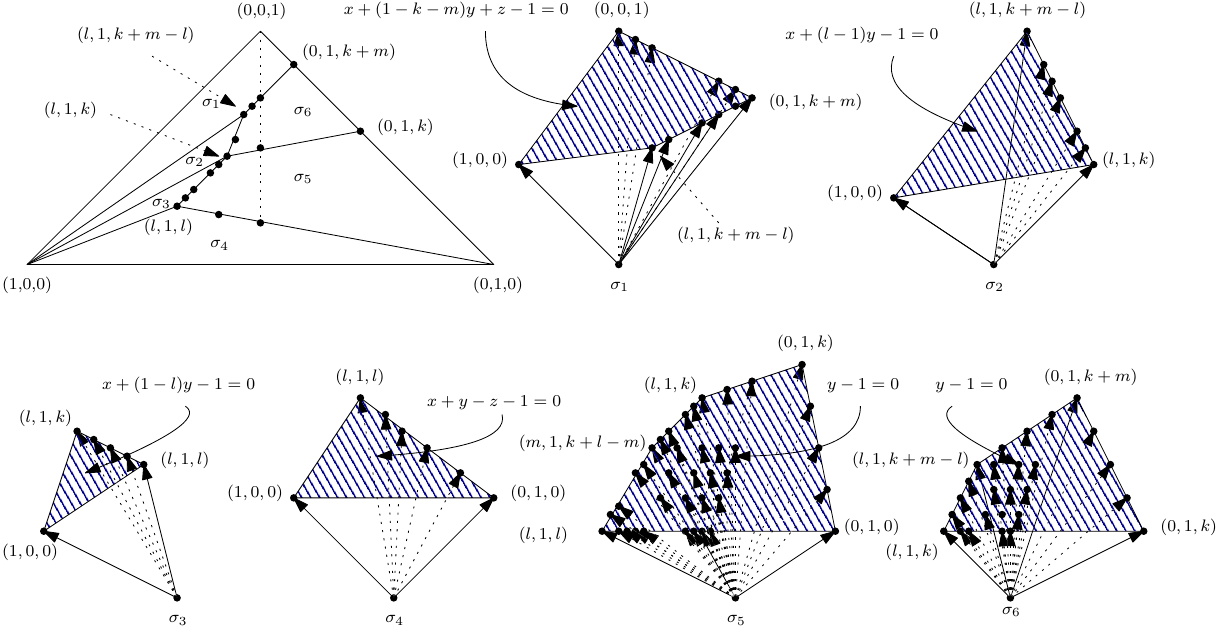}$  \caption{}  &
				
				\ \
				
				\ \
				
				$H_{\sigma_1}=\{(1,0,0),(0,0,1),(0,1,k+m),(l,1,k+m-l),(1,1,k+m-1),(2,1,k+m-2),$
				
				$\ldots,(l-1,1,k+m-l-1),(0,1,1),(0,1,2),\ldots,(0,1,k+m+1)\}$
				
				$H_{\sigma_2}=\{(1,0,0),(l,1,k),(l,1,k+m-l-1),(l,1,k+1),(l,1,k+2),\ldots,(l,1,k+m-l-1)\}$
				
				$H_{\sigma_3}=\{(1,0,0),(l,1,l),(l,1,k-1),(l,1,l+1),(l,1,l+2),\ldots,(l,1,k-1)\}$
				
				$H_{\sigma_4}=\{(1,0,0),(0,1,0),(l,1,l),(1,1,1),(2,1,2),\ldots,(l-1,1,l-1)\}$
				
				$H_{\sigma_5}=\{(0,1,0),(0,1,1),\ldots,(0,1,k),(1,1,1),(1,1,2),\ldots,(1,1,k),(2,1,2),\ldots,(2,1,k),$
				
				$\ldots,(l-1,1,l-1),(l-1,1,l),\ldots,(l-1,1,k),(l,1,l),(l,1,l+1),\ldots,(l,1,k)\}$
				
				$H_{\sigma_6}=\{(0,1,k),(0,1,k+1),\ldots,(0,1,k+m),(1,1,k),(1,1,k+1),\ldots,(1,1,k+m-1),$
				
				$(2,1,k),(2,1,k+1),\ldots,(2,1,k+m-2),\ldots,(l-1,1,k),(l-1,1,k+1),\ldots,$
				
				$(l-1,1,k+m-l-1),(l,1,k),(l,1,k+1),\ldots,(l,1,k+m-l)\}$

				\\
				
				\hline
				\vskip.9cm $C_{n,m}: x^{n-1}y^{2m+2}+y^{2m+4}-xz^2=0 $ \ \
				
				\ \
				
				$n$ is odd, $n\geq 3$, $m\geq 2$   &
				\vskip.4cm
				
				\ \
				
				$\includegraphics[width=5.4cm]{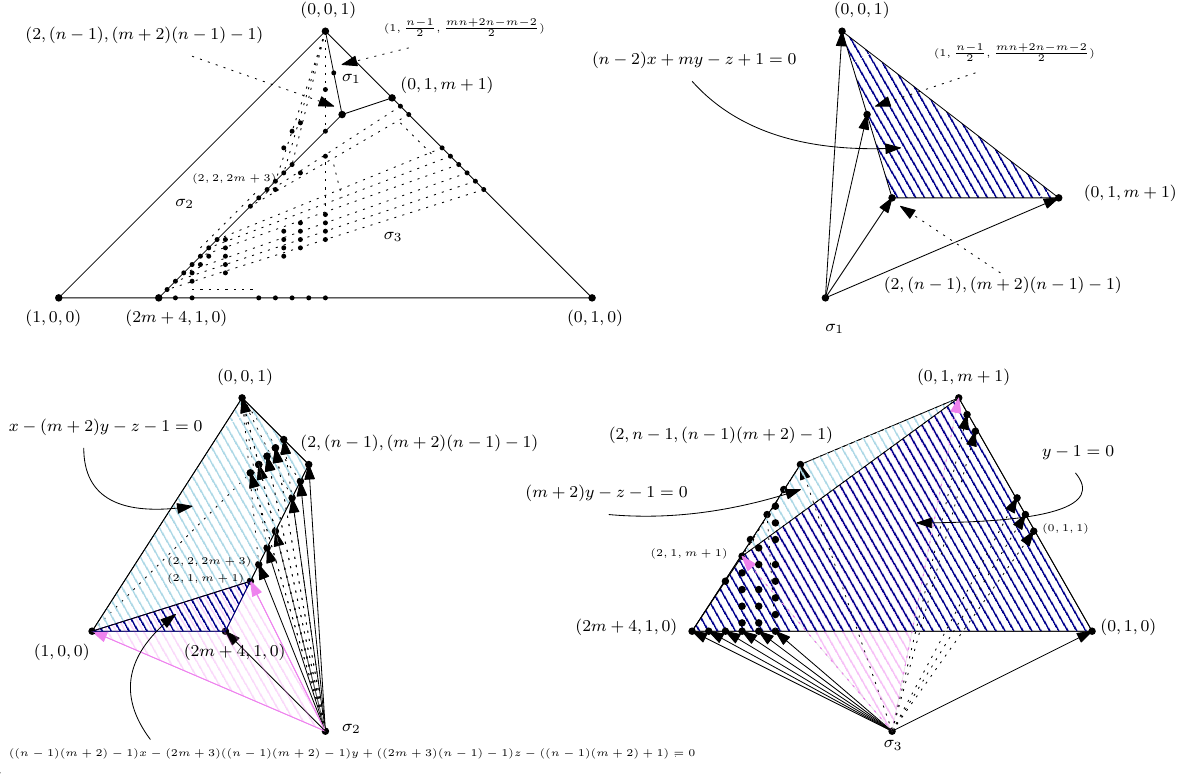}$   \caption{} &
				
				\ \
				
				\ \
				
				$H_{\sigma_1}=\{(0,0,1),(0,1,m+1),(2,n-1,mn+2n-m-3),(1,\frac{n-1}{2}, \frac{mn+2n-m-2}{2})\}$
				
				$H_{\sigma_2}=\{(1,0,0),(0,0,1),(2m+4,1,0), (2,n-1,mn+2n-m-3),(2m+2,1,1), (2m,1,2),$
				
				$ (2m-2,1,3), \ldots, (4,1,m), (2,1,m+1), (2,2,2m+3),(2,3,3m+5), \ldots,$
				
				$ (2,n-2,mn+2n-2m-5), (1,1,m+2),(1,2,2m+4),(1,3,3m+6),\ldots, $
				
				$(1, \frac{n-1}{2}, \frac{mn+2n-m-2}{2})\}$
				
				$H_{\sigma_3}=\{(0,1,0), (2m+4,1,0), (0,1,m+1),(2,n-1,mn+2n-m-3),(1,1,0), (2,1,0),$
				
				$ \ldots, (2m+3,1,0), (0,1,1), (1,1,1), \ldots,(2m+1,1,1), (2m+2,1,1), (0,1,2),(1,1,2),$
				
				$\ldots,(2m-1,1,2), (2m,1,2), \ldots,(0,1,m-1),(1,1,m-1),\ldots(5,1,m-1),(6,1,m-1),$
				
				$(0,1,m),(1,1,m),(2,1,m),(3,1,m),(4,1,m),(1,1,m+1),(1,2,2m+3),(1,3,3m+5),$
				
				$\ldots, (1,\frac{n-1}{2},\frac{mn+2n-m-4}{2}), (2,1,m+1),(2,2,2m+3),(2,3,3m+5),\ldots,$
				
				$(2,n-2,mn+2n-2m-5) \}$
				
				\\
				\hline 
				\vskip.9cm $C_{n,m}: x^{n-1}y^{2m+2}+y^{2m+4}-xz^2=0 $ \ \

				\ \
				
				$n$ is even, $n\geq 3$, $m\geq 2$   &
				\vskip.2cm
				
				\ \

				$\includegraphics[width=5.4cm]{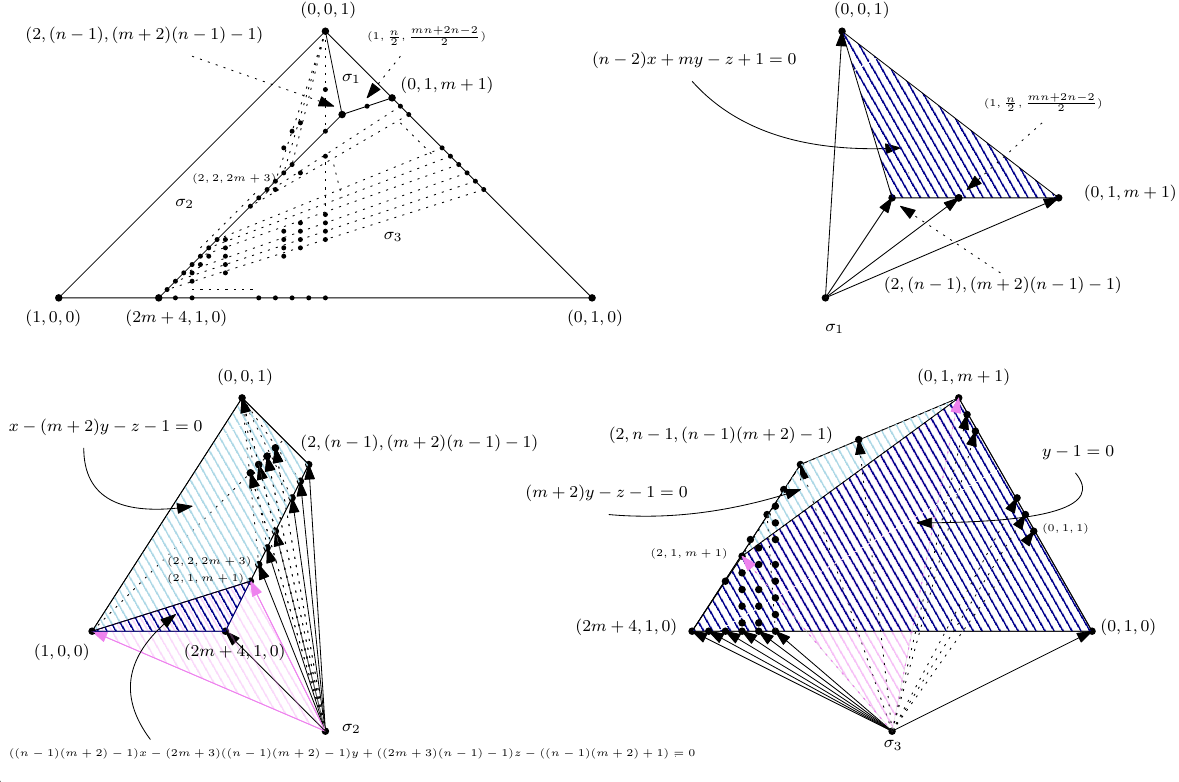}$   \caption{} &

				\ \
				
				\ \
				
				$H_{\sigma_1}=\{(0,0,1),(0,1,m+1),(2,n-1,mn+2n-m-3),(1,\frac{n}{2}, \frac{mn+2n-2}{2}) \}$
				
				$H_{\sigma_2}=\{(1,0,0),(0,0,1),(2m+4,1,0),(2,n-1,mn+2n-m-3),(2m+2,1,1),(2m,1,2),$
				
				$\ldots, (2,1,m+1),(2,2,2m+3),(2,3,3m+5),\ldots, (2,n-2,mn+2n-2m-5),(1,1,m+2),$
				
				$(1,2,2m+4),\ldots,(1,\frac{n-2}{2},\frac{mn+2n-2m-4}{2})\}$
				
				$H_{\sigma_3}=\{(0,1,0), (2m+4,1,0), (0,1,m+1),(2,n-1,mn+2n-m-3),(1,1,0), (2,1,0),$
				
				$ \ldots, (2m+3,1,0), (0,1,1), (1,1,1), \ldots,(2m+1,1,1), (2m+2,1,1), (0,1,2),(1,1,2),$
				
				$\ldots,(2m-1,1,2), (2m,1,2), \ldots,(0,1,m-1),(1,1,m-1),\ldots(5,1,m-1),(6,1,m-1),$
				
				$(0,1,m),(1,1,m),(2,1,m),(3,1,m),(4,1,m),(1,1,m+1),(1,2,2m+3),(1,3,3m+5),$
				
				$\ldots, (1,\frac{n}{2},\frac{mn+2n-2}{2}), (2,1,m+1),(2,2,2m+3),(2,3,3m+5),\ldots,$
				
				$(2,n-2,mn+2n-2m-5) \}$
				
				\\
				\hline
				
			\end{tabular}
		\end{sideways}
	\end{table}

	\renewcommand*{\arraystretch}{0,5}
	\begin{table}[htbp]
		\begin{sideways}
			\begin{tabular}{ | p{5cm}  | p{6cm} | p{11cm} | }
				\hline
				{\bf Type of  $f$ } &  {\bf Subprofiles} &  {\bf EV $\sim$ Hilbert Basis} \\
				\hline
				\vskip.9cm $D_{n-1}:z^2-xy^2-x^4=0 $ 
				
				\ \
				
				$n\geq 1$  &
				\vskip.4cm

				$\includegraphics[width=5.1cm]{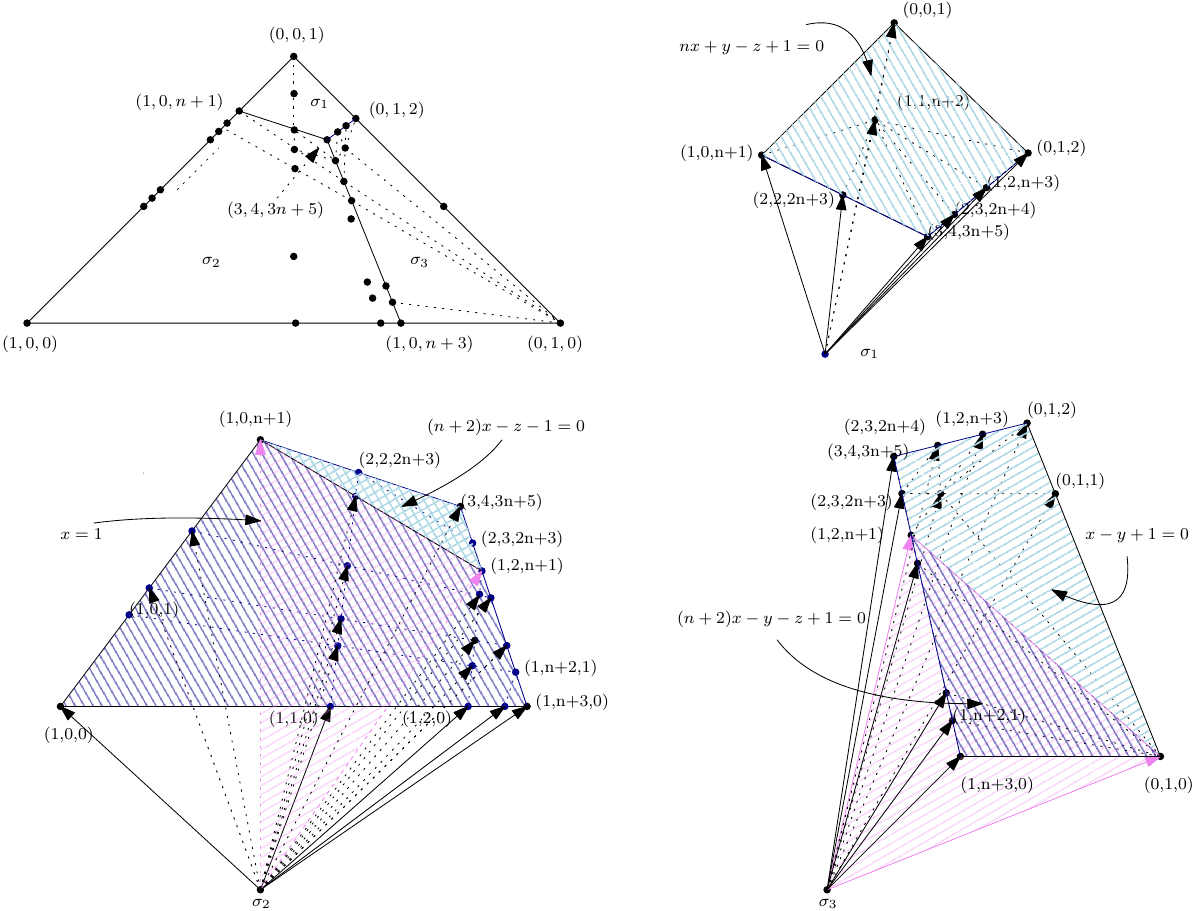}$  \caption{}  &
				
				\ \
				
				\ \
				
				$H_{\sigma_1}=\{(1,0,0),(1,0,n+1),(1,n+3,0),(3,4,3n+5),(1,0,1),(1,0,2),\ldots,(1,0,n),$
				
				$(1,1,0), (1,2,0),\ldots,(1,n+2,0),(1,1,1),(1,1,2),(1,1,n+1),(1,2,1),(1,2,2),(1,2,n),$
				
				$(1,3,1), (1,3,2),(1,3,n-1),(1,4,1),(1,4,2),(1,4,n-2),(1,n,1),(1,n,2),$ $(1,n+1,1),$
				
				$(2,2,2n+3),(2,3,2n+3),(1,2,n+1),(1,3,n),\ldots,(1,n+1,2),(1,n+2,1)\}$

				$H_{\sigma_2}=\{e_3,(0,1,2),(1,0,n+1),(3,4,3n+5),(1,2,n+3),(2,3,2n+4),(2,2,2n+3),$
				
				$(1,1,n+2)\}$ 
				$\{e_2,(0,1,2),(1,n+3,0),(3,4,3n+5),(0,1,1),(1,2,n+3),(2,3,2n+4),$
				
				$(2,3,2n+3),(1,2,n+2),(1,n+2,1),(1,n+1,2),(1,n,3),\ldots,(1,3,n),(1,2,n+1) \}$
				
				$H_{\sigma_3}=\{e_2,(0,1,2),(1,n+3,0),(3,4,3n+5),(0,1,1),(1,2,n+3),(2,3,2n+4),$
				
				$(2,3,2n+3),(1,2,n+2),(1,n+2,1),(1,n+1,2),(1,n,3),\ldots,(1,3,n),(1,2,n+1) \}$
				
				\\
				\hline 
				\vskip.9cm $F_{k-1}: y^{2k+3} + x^2y^{2k} – xz^2=0$ 
				
				\ \
				
				$k\geq 2$ &
				\vskip.2cm
				
				\ \
				
				$\includegraphics[width=5.1cm]{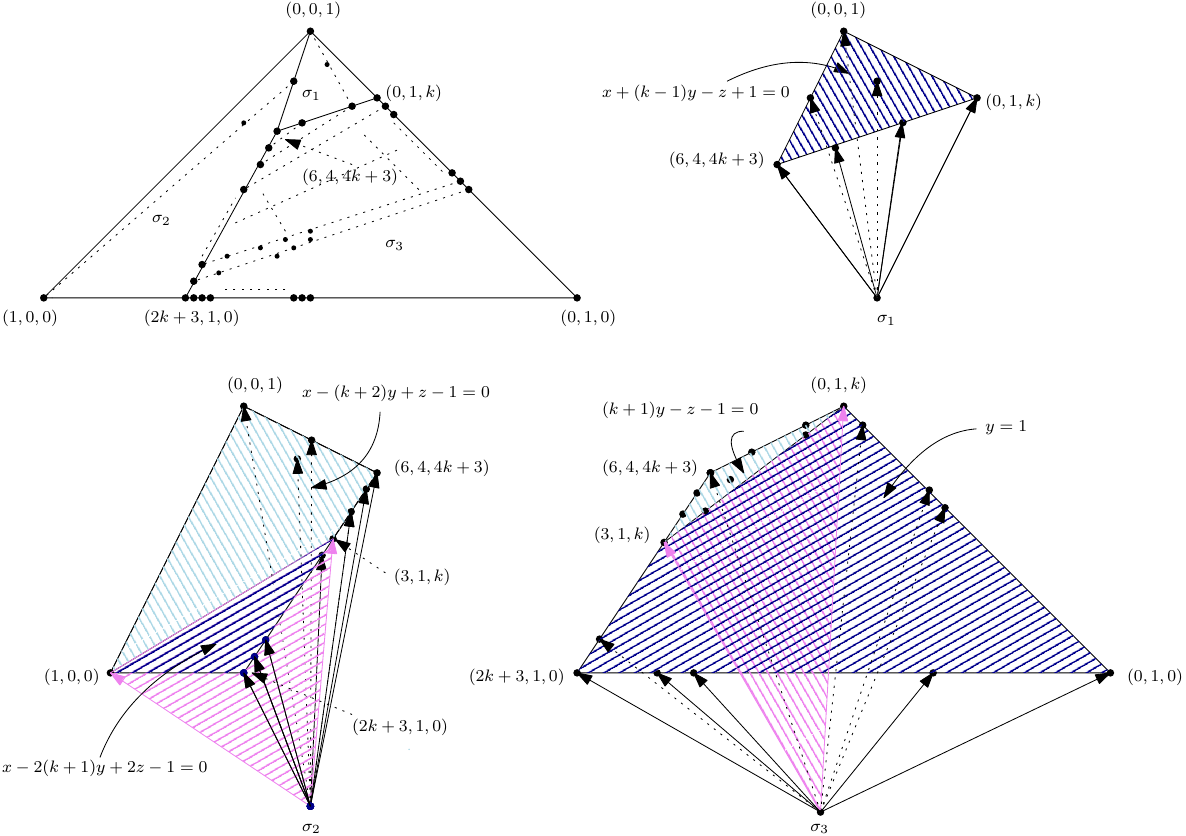}$   \caption{} & 
				
				\ \
				
				\ \
				
				$ H_{\sigma_1}=\{e_1,(0,1,k),(6,4,4k+3)
				(3,2,2k+2),(2k,2,2k+1),(4,3,3k+2),(1,1,k+1)\}$

				$H_{\sigma_2}=\{e_1,e_3,(2k+3,1,0),(6,4,4k+3)
				(2k+1,1,1),(2k-1,1,2),(2k-3,1,3),\ldots,$
				
				$(7,1,k-2),(5,1,k-1),(3,1,k),(4,2,2k+1),(5,3,3k+2),(3,2,2k+1), (2,1,k+1) \}$
				
				$H_{\sigma_3}=\{e_2,(0,1,k),(2k+1,3,0),(6,4,4k+3),(0,1,1),(0,1,2),\ldots,(0,1,k-1),(1,1,0),$
				
				$(2,1,0),\ldots,(2k+2,1,0),(2,2,2k+1),(4,3,3k+2),
				(5,3,3k+2),(4,2,2k+1),(3,1,k),$
				
				$(5,1,k-1),(2k+1,1,1),(2k-1,1,2),(2k-3,1,3),(2k-5,1,4),\ldots,(7,1,k-2),$
				
				$(1,1,1),(2,1,1),\ldots,(2k,1,1),
				(1,1,2),(2,1,2),\ldots,(2k-2,1,2),
				(1,1,3),(2,1,3),\ldots,$
				
				$(2k-4,1,3),\ldots(1,1,k-2),(2,1,k-2),\ldots,(6,1,k-2),(1,1,k-1),(2,1,k-1),\ldots,$
				
				$(4,1,k-1),(1,1,k),(2,1,k), (3,2,2k+1) \}$
				
				\\
				\hline
				
				\vskip.9cm $E_{6,0}:z^3+y^3z+x^2y^2=0 $  
				
				&
				\vskip.4cm
				$\includegraphics[width=5.1cm]{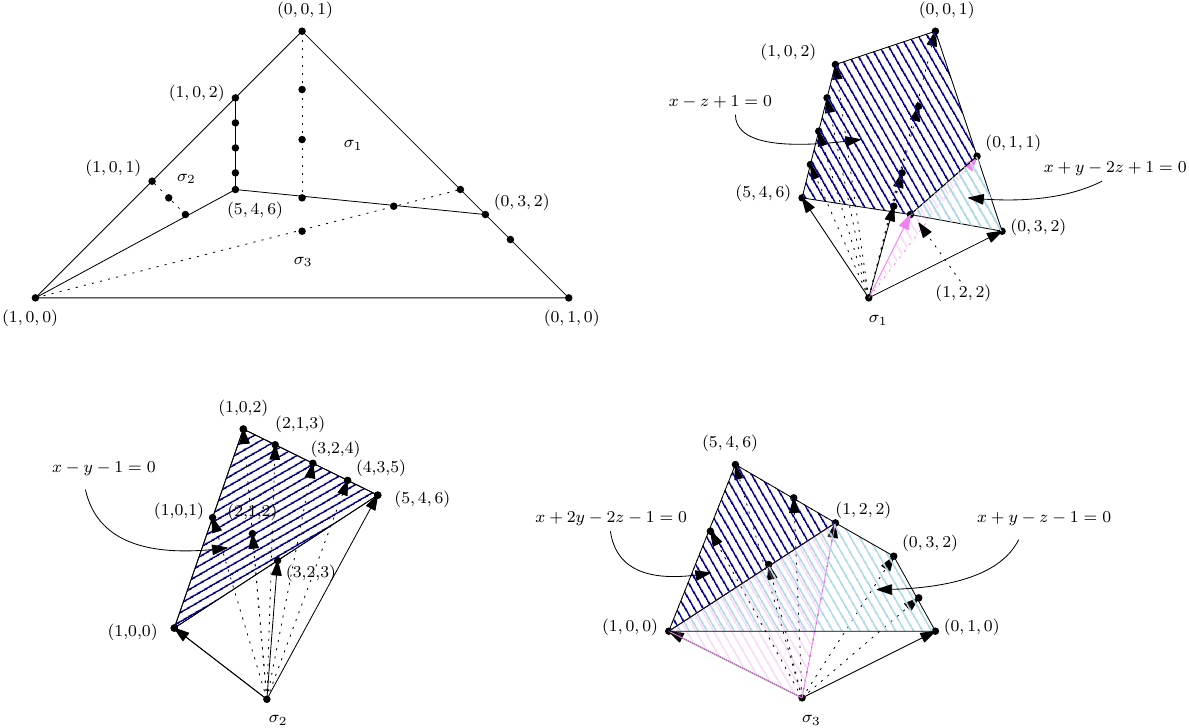}$  \caption{}  &
				
				\ \
				
				\ \
				
				$H_{\sigma_1}=\{(0,0,1),(1,0,2),(0,3,2),(5,4,6),(0,1,1),(1,1,2),(1,2,2),(2,1,3),(2,2,3),$
				
				\noindent $ (3,2,4),(3,3,5)(4,3,6)\}$ 
				
				$H_{\sigma_2}=\{(1,0,0),(1,0,2),(5,4,6),(1,0,1),(2,1,2),(2,1,3),(3,2,3),(3,2,4),(4,3,6)\}$
				
				$H_{\sigma_3}=\{(1,0,0),(0,0,1), (0,3,2),(5,4,6),(0,2,1),(1,1,1), (1,2,2),(3,2,3),(3,3,5)\}$
				\\
				\hline
				
			\end{tabular}
		\end{sideways}
	\end{table}

	\renewcommand*{\arraystretch}{0,5}
	\begin{table}[htbp]
		\begin{sideways}
			\begin{tabular}{ | p{5cm}  | p{6cm} | p{11cm} | }
				\hline
				{\bf Type of  $f$ } &  {\bf Subprofiles} &  {\bf EV $\sim$ Hilbert Basis} \\
				
				\hline
				\vskip.9cm $E_{0,7}:z^3+y^5+x^2y^2=0 $  
				
				\ \
				
				&
				\vskip.4cm
				$\includegraphics[width=5.5cm]{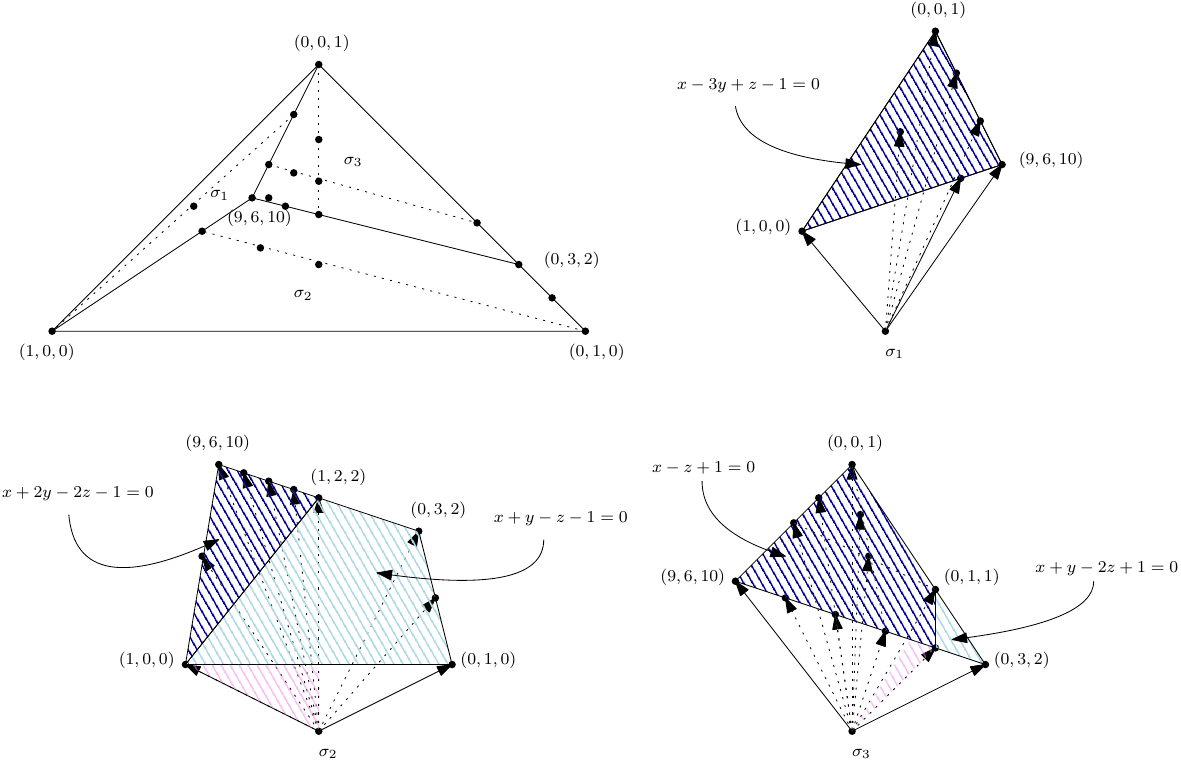}$   \caption{} &
				
				\ \
				
				\ \
				
				$H_{\sigma_1}=\{(1,0,0),(0,0,1),(9,6,10),(2,1,2),(3,2,4),(5,3,5),(6,4,7)\}$
				
				$H_{\sigma_2}=\{(1,0,0),(0,0,1),(0,3,2),(9,6,10),(0,2,1),(1,1,1),(1,2,2),(3,2,3),(3,3,4),$
				
				$(5,3,5),(5,4,6),(7,5,8)\}$
				
				$H_{\sigma_3}=\{(0,0,1),(0,3,2),(9,6,10),(0,1,1),(1,1,2),(1,2,2),(2,2,3),(3,2,4),(3,3,4),$
				
				$(4,3,5),(6,4,7),(5,4,6),(7,5,8)\}$
				
				\\
				\hline 
				\vskip.9cm $E_{7,0}:z^3+x^2yz+y^4 =0$
				
				\ \
				
				&
				\vskip.2cm
				$\includegraphics[width=5.5cm]{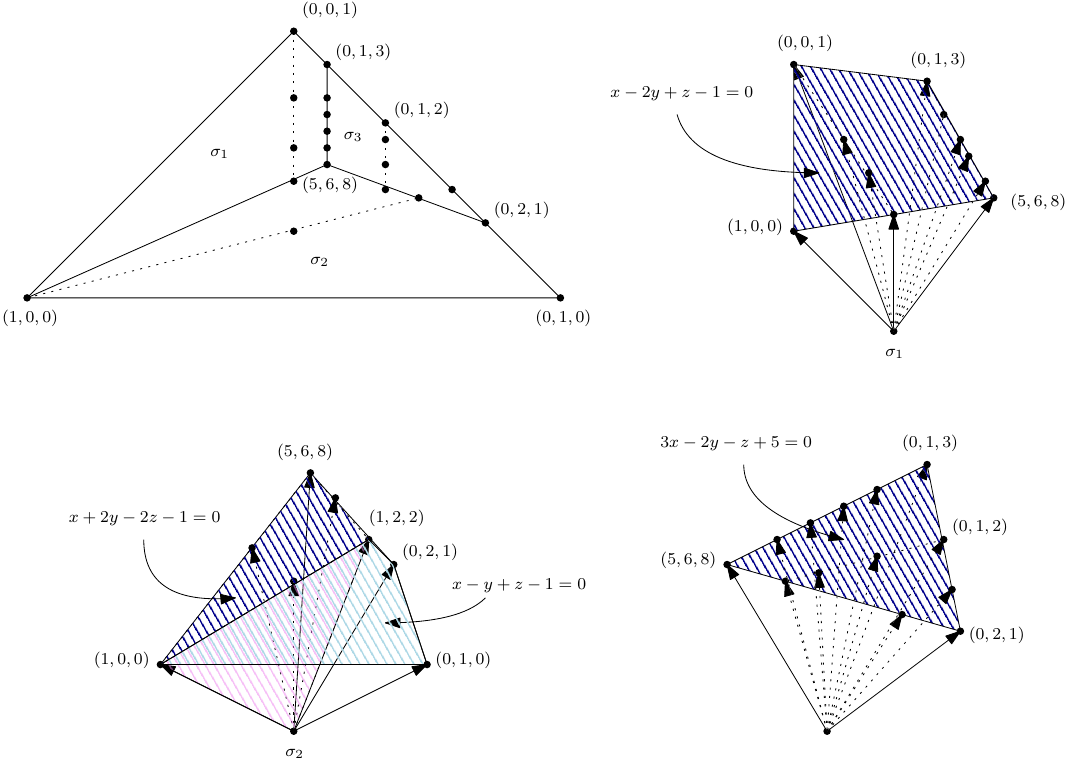}$   \caption{} & 
				
				\ \
				
				\ \
				
				$H_{\sigma_1}=\{(1,0,0),(0,0,1),(0,1,3),(5,6,8),(1,1,2),(1,2,4),(2,2,3),(2,3,5),(3,3,4),$
				
				$(3,4,6),(4,5,7)\} $
				
				$H_{\sigma_2}=\{(1,0,0),(0,1,0),(0,2,1),(5,6,8),(1,1,1),(1,2,2),(3,3,4),(3,4,5)\}$
				
				$H_{\sigma_3}=\{(0,1,3),(0,2,1),(5,6,8),(0,1,1),(0,1,2),(1,2,2),(1,2,3),(1,2,4),(2,3,4),$
				
				$(2,3,5),(3,4,5)\,(3,4,6),(4,5,7)\}$
				
				\\
				\hline
				
				\vskip.9cm $H_{n}:z^3+x^2y(x+y^{k-1})=0 $  
				
				\ \
				
				$n=3k-1$, $n\geq 1$
				
				\ \
				
				&
				\vskip.4cm
				$\includegraphics[width=5.5cm]{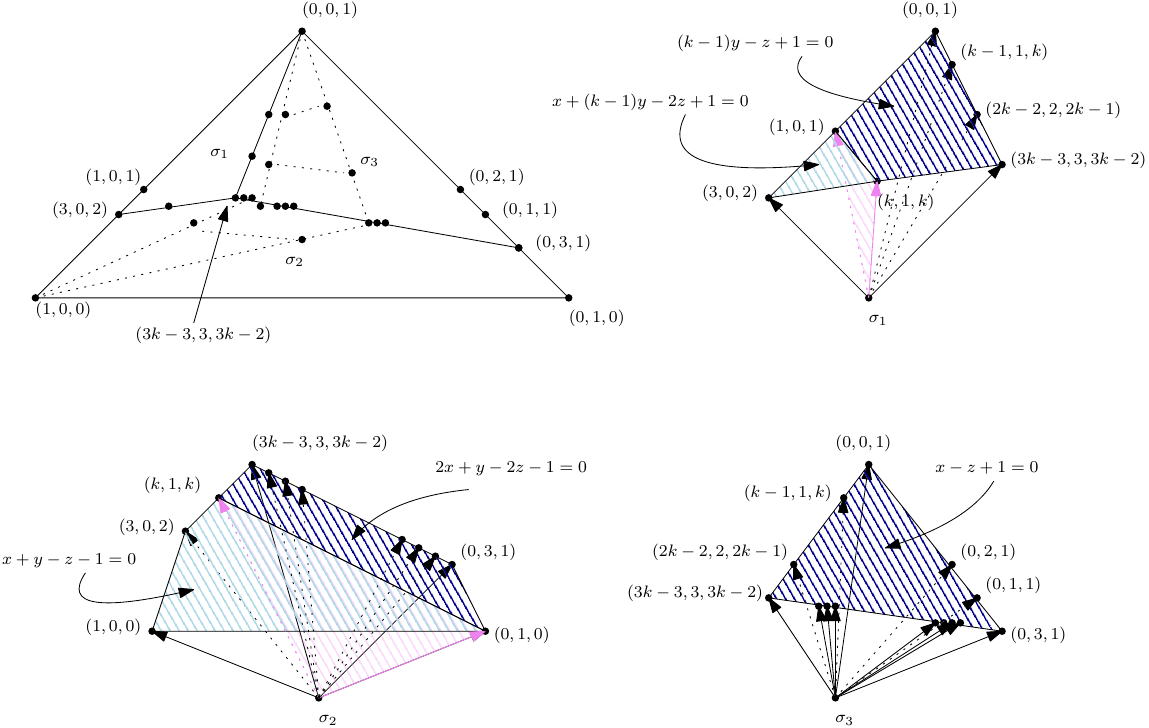}$   \caption{} &
				
				\ \
				
				\ \
				
				$H_{\sigma_1}=\{(0,0,1),(3,0,2),(3k-3,3,3k-2),(1,0,1),(k,1,k),(k-1,1,k),(2k-2,2,2k-1)\}$
				
				$H_{\sigma_2}=\{(1,0,0),(0,1,0),(0,3,1),(3k-3,3,3k-2),(2,0,1),(k,1,k),(1,3,2),(2,3,3),$
				
				$\ldots,(3k-4,3,3k-3),(1,1,1),(2,1,2),\ldots,(k-1,1,k-1)\}$
				
				$H_{\sigma_3}=\{(0,0,1),(0,3,1),(3k-3,3,3k-2),(0,1,1),(0,2,1),(k-1,1,k),(2k-2,2,2k-1),$
				
				$(1,3,2),(2,3,3),\ldots,(3k-4,3,3k-3),(1,1,2),(2,1,3),\ldots,(k-2,1,k-1),(2,2,3),$
				
				$(3,2,4),\ldots,(2k-4,2,2k-3),(1,2,2),(2,2,3),\ldots,(k-2,2,k-1)\}$
				\\
				\hline
			\end{tabular}
		\end{sideways}
	\end{table}
	
	\renewcommand*{\arraystretch}{0,5}
	\begin{table}[htbp]
		\begin{sideways}
			\begin{tabular}{ | p{5cm}  | p{6cm} | p{11cm} | }
				\hline
				{\bf Type of  $f$ } &  {\bf Subprofiles} &  {\bf EV $\sim$ Hilbert Basis} \\
				
				\hline
				\vskip.9cm $H_{n}:z^3+xy^kz+x^2y=0 $  
				
				\ \
				
				$n=3k$, $n\geq 1$ 
				
				\ \
				
				&
				\vskip.4cm
				$\includegraphics[width=6cm]{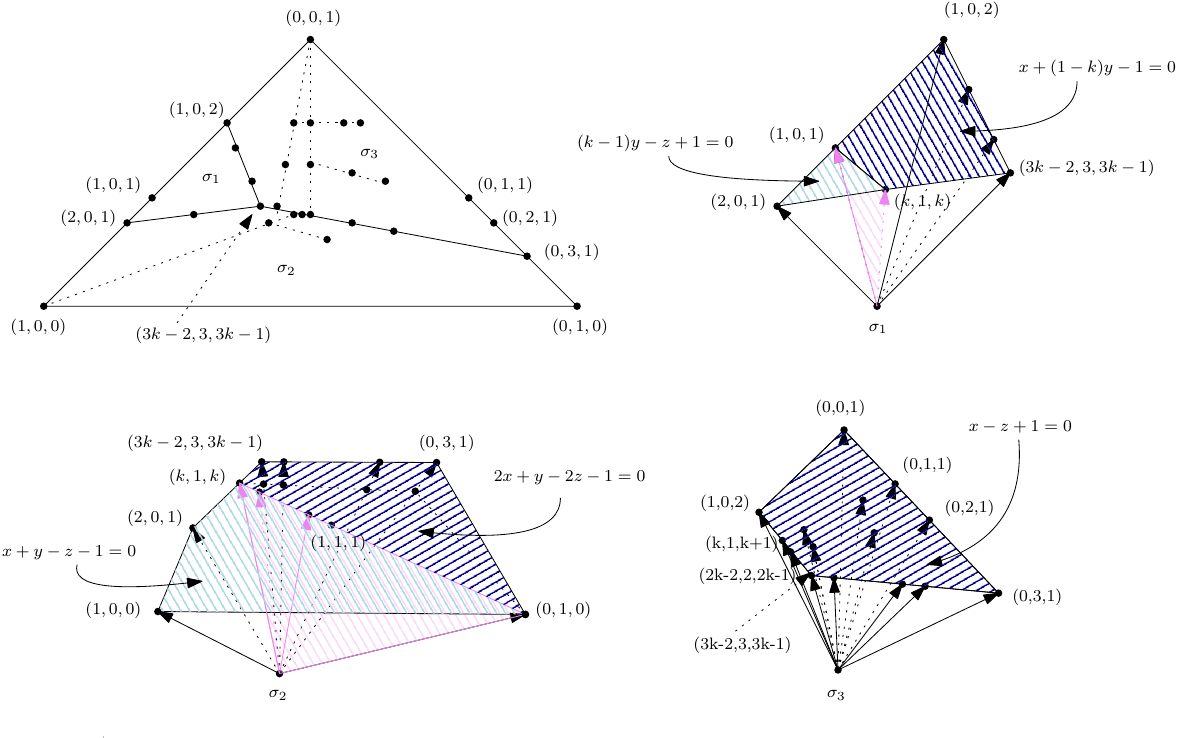}$   \caption{} & 
				
				\ \
				
				\ \
				
				$H_{\sigma_1}=\{(1,0,2),(2,0,1),(3k-2,3,3k-1),(1,0,1),(k,1,k),(k,1,k+1),(2k-1,1,2k)\}$
				
				$H_{\sigma_2}=\{(0,0,1),(1,0,2),(0,3,1),(3k-2,3,3k-2),(0,1,1),(0,2,1),(k,1,k+1),$
				
				$(2k-1,2,2k),(1,3,2),(2,3,3),(3,3,4),\ldots,(3k-6,3,3k-5),(3k-3,3,3k-2),(1,1,2),$
				
				$(2,1,3),\ldots,(k-2,1,k-1),(k-1,1,k),(2,2,3),(3,2,4),\ldots,(2k-4,2,2k-3),$
				
				$(2k-2,2,2k-1),(1,2,2),(2,2,3),\ldots,(2k-3,2,2k-2)\}$
				
				$H_{\sigma_3}=\{(1,0,0),(0,1,0),(2,0,1),(3k-2,3,3k-1),(1,3,2),(2,3,3),\ldots,(3k-3,3,3k-2),$
				
				$(1,1,1),(2,1,2),\ldots,(k,1,k)\}$
				
				\\
				\hline 
				\vskip.9cm $H_{n}: z^3+xy^{k+1}z+x^3y^2=0$
				
				\ \
				
				$n=3k+1$, $n\geq 1$
				
				\ \
				
				&
				\vskip.2cm
				$\includegraphics[width=6cm]{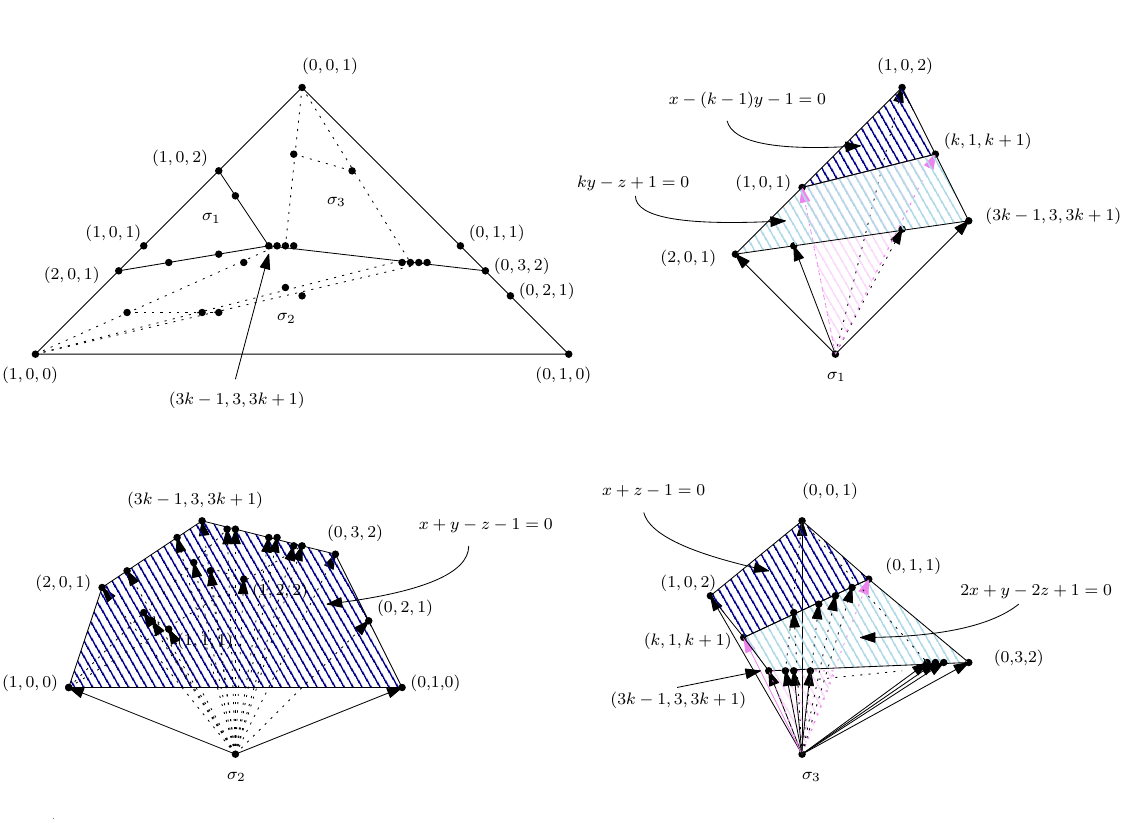}$  \caption{}  & 
				
				\ \ 
				
				\ \
				
				$H_{\sigma_1}=\{(1,0,2),(2,0,1),(3k-1,3,3k+1),(1,0,1),(k,1,k+1),(k+1,1,k+1),$
				
				$(2k,2,2k+1)\} $
				
				$H_{\sigma_2}=\{(1,0,0),(0,1,0),(0,3,2),(2,0,1),(3k-1,3,3k+1),(0,2,1),(1,3,3),(2,3,4),\ldots,$
				
				$(3k-2,3,3k),(1,1,1),(2,1,2),\ldots,(k+1,1,k+1),(1,2,2),(2,2,3),(3,2,4),\ldots,$
				
				$(2k,2,k+1)\}$
				
				$H_{\sigma_3}=\{(0,0,1),(1,0,2),(0,3,2),(3k-1,3,3k+1),(0,1,1),(1,3,3)(2,3,4),\ldots,$
				
				$(3k-2,3,3k),(1,1,2),(2,1,3),\ldots,(k,1,k+1)\}$
				
				\\
				\hline
			\end{tabular}
		\end{sideways}
	\end{table}
	
}

\newpage


\vskip.4cm

\noindent B. Karadeniz \c Sen \hfill C. Pl\'enat \\
Gebze Technical University  \hfill  Aix Marseille University, 12M, CMI\\ 
Department of Mathematics  \hfill   Technop\^ole Ch\^ateau-Gombert\\
41400, Kocaeli, Turkey  \hfill 39, rue F. Joliet Curie, 13453 Maresille Cedex 13 \\ 
E-mail: busrakaradeniz@gtu.edu.tr   \hfill E-mail: camille.plenat@univ-amu.fr\\

\noindent M. Tosun\\
Galatasaray University\\
Department of Mathematics\\  
Ortak{\"o}y 34357,  Istanbul, Turkey\\
E-mail: mtosun@gsu.edu.tr

\end{document}